\documentclass{amsart}
\usepackage[T1]{fontenc}
\usepackage[utf8x]{inputenc}

\usepackage{amscd,amsmath,amssymb,amsfonts,epsfig,graphics,amsthm}
\usepackage{booktabs}
\usepackage{graphicx}
\usepackage{epstopdf}
\usepackage{nccmath} % in the preamble

\usepackage{hyperref}
\usepackage[authoryear]{natbib}
\usepackage{xcolor}

\let\oldcite\cite
\renewcommand{\cite}[1]{{\color{red}\oldcite{#1}}}

\let\oldcitep\citep
\renewcommand{\citep}[1]{{\color{red}\oldcitep{#1}}}

\let\oldcitet\citet
\renewcommand{\citet}[1]{{\color{red}\oldcitet{#1}}}
\usepackage{mdframed} %puts a box around theorem
\usepackage{lipsum}
\usepackage{tcolorbox} %to out a colored box around
\usepackage{comment}  %commenting a block text
\tcbuselibrary{theorems}
\usepackage[margin=3cm]{geometry}%controls margin width
\newtheorem{theorem}{Theorem}[section]
\vspace{10pt}
\newtheorem{proposition}[theorem]{Proposition}

\newtheorem{lemma}[theorem]{Lemma}

\theoremstyle{definition}
\newtheorem{definition}[theorem]{Definition}
\vspace{10pt}

\newtheorem{example}[theorem]{Example}
\theoremstyle{remark}
\newtheorem{remark}[theorem]{Remark}

\newtheorem{corollary}[theorem]{Corollary}

\newtcbtheorem{mytheo}{Theorem}%
{colback=green!5,colframe=black!35!black,fonttitle=\bfseries}{th}

\newcommand{\norm}[1]{\left\lVert#1\right\rVert}

\newcommand{\Ddim}{\mathbb{R}^D}

\newcommand{\bbn}{\mathbb{N}}

\newcommand{\RR}{\mathbb{R}}



\def\cL{\mathcal L}

\def\cE{\mathcal E}

\def\ra{\rangle}
\def\la{\langle}

\begin{document}
\pagenumbering{arabic}

\title[Graph Laplace operator recovery]{\large \textbf{Identifiability and consistent recovery for the unnormalized graph Laplace operators}}
\date{\today}
\author{Susovan Pal}
\address{S.~Pal: Department of Mathematics and Data Science, Vrije Universiteit Brussel, Pleinlaan 2, B-1050 Elsene, Belgium.}
\email{susovan.pal@vub.be, susovan97@gmail.com}

\begin{abstract}
We study whether an unnormalized graph Laplace operator can determine
both the geometry of a manifold and its sampling density. On a fixed
compact smooth manifold without boundary, we use a known positive,
continuous, strictly decreasing radial kernel at a fixed positive
bandwidth. For positive sampling densities, the operator built from
geodesic distances determines the Riemannian metric and identifies the
density up to equality almost everywhere. When the metric is induced
by a Euclidean embedding, the operator built from ambient distances
also determines the embedding up to rotations, reflections and
translations, and the density. On compact parameter classes with
positive continuous densities, minimizing the distance between
empirical and population operators gives estimates that converge to
the true parameters under independent, identically distributed
sampling. This recovery result assumes that the empirical operator
can be evaluated at arbitrary points of the manifold.
\end{abstract}

\keywords{Graph Laplacians; manifold learning; identifiability; inverse problems; consistency.}
\subjclass[2020]{Primary 53C21; Secondary 47G10, 62R30.}
\maketitle

\section{Introduction}\label{scn:intro}

\subsection{Joint recovery from graph Laplace operators}

This paper studies the joint recovery of a Riemannian metric and
a sampling density from an unnormalized graph Laplace operator
at a single fixed positive bandwidth. Such an operator depends
on both the distances between points and the distribution from
which the data are sampled. We ask whether it determines these
two ingredients uniquely and whether its empirical counterpart
can be used to estimate them consistently.

We work on a fixed compact smooth manifold without boundary.
The unknown parameters are its Riemannian metric and a
probability density with respect to the corresponding
Riemannian volume measure. We consider two settings: intrinsic
operators, whose weights use geodesic distances, and extrinsic
operators, whose weights use Euclidean distances between
embedded points. In the extrinsic recovery problem, the
Riemannian metric is the one induced by the embedding.

Our main results establish joint identifiability at the
population level and consistent minimum-distance estimation
on compact parameter classes. The intrinsic population
operator determines the metric and identifies the positive
sampling density up to equality almost everywhere. In the
extrinsic setting, it determines the embedding up to rigid
motions and identifies the sampling density. For positive
continuous densities, these identifiability results lead to
consistent joint recovery from the corresponding empirical
operators under independent, identically distributed sampling.
The bandwidth remains fixed throughout.

\subsection{Related work}

Graph Laplacians are widely used in machine learning, data
analysis, and manifold learning. Their weights record
proximity between sampled points, and their spectral
properties support methods for clustering, dimension
reduction, and geometric analysis; see, for example,
\citep{BelkinNiyogiJCSS2008,BelkinNiyogiNIPS2006}.
The relationship between discrete graph Laplace operators
and their continuous counterparts has been studied extensively.
For manifolds without boundary, pointwise consistency and
related continuum limits were investigated in
\citep{BelkinNiyogiJCSS2008,HeinAudibertLuxburgJMLR2007,
HeinAudibertLuxburgCOLT2005}, while spectral convergence for
Laplacian eigenmaps was studied in
\citep{BelkinNiyogiNIPS2006}. Work on boundaries and
singularities includes
\citep{Belkin2012Boundary,PalTewodrose2025,
Pal2025_AsymptoticsGraphLaplace_IsolatedSingularity}.
In discrete differential geometry, the cotangent
Laplace--Beltrami operator plays a related role on triangulated
surfaces and meshes \citep{BobenkoSuris2008}.
An inverse result in that setting is due to
\citet{ZengGuoLuoGu2012}, who showed that the discrete metric
of a triangulated surface is determined by its cotangent
Laplace operator up to global scaling.

The estimation of Riemannian metrics and sampling densities
from manifold-valued data has also been studied from several
perspectives. \citet{PerraultJoncasMeila2013} proposed a method
for estimating the Riemannian metric using graph Laplacians.
\citet{BerenfeldHoffmann2021} studied density estimation when
the supporting submanifold is unknown and established
convergence rates, while \citet{TangYang2023} obtained minimax
rates for estimating probability distributions supported on
unknown submanifolds. More recently,
\citet{BambergerEtAl2026} developed a method for learning
Riemannian geometric information from data using neural
networks.

Several approaches estimate geometric and distributional
information within the same framework.
\citet{ChengHyndmanPanagiotelis2025} proposed a
distortion-corrected kernel density estimator on Riemannian
manifolds. The methods of \citet{BrehmerCranmer2020} and
\citet{CateriniEtAl2021} learn an embedded manifold and a
probability density within a common model.
\citet{LandaCheng2023} developed methods for estimating
sampling densities and geometric quantities, including
pairwise distances, using doubly stochastic kernel matrices,
and established probabilistic error bounds.

These works provide several ways to estimate geometry,
distributions, or both. Our focus is the inverse problem
associated with a particular operator: whether a single
fixed-bandwidth unnormalized graph Laplace operator identifies
the full metric--density pair, and whether fitting its
empirical counterpart yields consistent estimates of both
parameters.

\subsection{Our method for joint metric-density estimation}

We use a known kernel profile
$k:[0,\infty)\to(0,\infty)$ that is continuous and strictly
decreasing. No differentiability of $k$ is required.
The Gaussian profile $k(r)=e^{-r}$ is one example.
At a fixed bandwidth $t>0$, each admissible metric--density
pair defines a population graph Laplace operator. We estimate
the parameters jointly by minimizing the distance between
the empirical operator and these population operators.
In the extrinsic setting, the parameters are an embedding
class modulo rigid motions and its sampling density.

The argument has three steps. First, we prove population
identifiability. Second, on compact parameter classes, we
show that the maps from parameters to operators are
continuous and have continuous inverses onto their images.
Third, we prove almost-sure convergence of the empirical
operators in the weak operator metric $d_*$.
Together, these results give consistency of measurable exact
and asymptotic minimum-distance estimators.

The information used by these estimators is the empirical
operator evaluated at arbitrary query points $x\in M$, as
defined in Section~\ref{scn:prelims}. Forming this operator
requires the distances from each query point to the sampled
points. These are geodesic distances in the intrinsic setting
and Euclidean distances between embedded points in the
extrinsic setting. An observed $n\times n$ graph Laplacian
matrix alone does not specify this extension to arbitrary
query points.

For the intrinsic recovery result,
$\Theta_{\mathrm{int}}$ is a compact class of pairs $(g,p)$,
where $g$ is a $C^2$ Riemannian metric and $p$ is a strictly
positive continuous probability density with respect to
$\mu_g$. Compactness is taken in the product of the $C^2$
topology for metrics and the uniform topology for densities.
For the extrinsic recovery result,
$\Theta_{\mathrm{ext}}$ is a compact class of pairs
$([\iota],p)$, where $[\iota]$ is the rigid-motion class of a
$C^2$ embedding and $p$ is a strictly positive continuous
probability density with respect to its induced volume
measure. Compactness is taken in the product of the quotient
$C^2$ topology for embedding classes and the uniform topology
for densities.

\subsection{Main theorems for consistent recovery}
The following theorems state the main consistency results.
They use the weak operator metric $d_*$ and the forward maps
defined in Sections~\ref{scn:prelims} and
\ref{sec:forwardmaps}. Their proofs are in
Appendix~\ref{app:empirical}.

\begin{theorem}[\textbf{Consistent recovery of the Riemannian metric and sampling density from fixed-bandwidth graph Laplace operators}]
\label{thm:intrinsic_recovery_intro}
Let $M$ be a compact smooth manifold without boundary of dimension $d\ge1$, fix $t>0$, and let $\Theta_{\mathrm{int}}$ be a compact class as in Theorem~\ref{thm:top_embedding_intrinsic_intro}. Fix $(g_0,p_0)\in\Theta_{\mathrm{int}}$ and draw $X_1,X_2,\ldots$ independently from $p_0\,\mu_{g_0}$. Form the empirical operator $L_{n,t}^{d_{g_0}}$ on $C(M)$ as in Definition~\ref{def:discrete_GL}. Every measurable exact or asymptotic minimum-distance estimator $(\widehat g_n,\widehat p_n)$ over $\Theta_{\mathrm{int}}$, as in Definition~\ref{def:minimum_distance_estimator} and fitted to this operator using $d_*$ and $\Phi_t^{\mathrm{int}}$, satisfies
\[
\widehat g_n\to g_0\quad\text{in }C^2,\qquad \widehat p_n\to p_0\quad\text{uniformly on }M,
\]
both in probability. For an exact estimator, or an asymptotic estimator whose defining approximation error tends to zero almost surely, both conclusions also hold almost surely.
\end{theorem}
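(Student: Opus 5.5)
The argument follows the standard minimum-distance template. It combines three ingredients already described in the introduction: the topological-embedding property of the forward map $\Phi_t^{\mathrm{int}}$ on the compact class $\Theta_{\mathrm{int}}$ (Theorem~\ref{thm:top_embedding_intrinsic_intro}), almost-sure convergence of the empirical operator in $d_*$, and an elementary inequality comparing the fitted operator with the truth. Write $L_0=\Phi_t^{\mathrm{int}}(g_0,p_0)$ for the population operator and $L_n=L_{n,t}^{d_{g_0}}$ for the empirical one. For an asymptotic estimator, $d_*(L_n,\Phi_t^{\mathrm{int}}(\widehat g_n,\widehat p_n))\le \inf_{\theta\in\Theta_{\mathrm{int}}} d_*(L_n,\Phi_t^{\mathrm{int}}(\theta))+\varepsilon_n$, with $\varepsilon_n\to0$ in probability, or almost surely, or $\varepsilon_n=0$ in the exact case. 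Since $(g_0,p_0)\in\Theta_{\mathrm{int}}$, the infimum is at most $d_*(L_n,L_0)$. The triangle inequality then gives
\[
d_*\bigl(\Phi_t^{\mathrm{int}}(\widehat g_n,\widehat p_n),L_0\bigr)\le 2\,d_*(L_n,L_0)+\varepsilon_n .
\]

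The main probabilistic step is to show $d_*(L_n,L_0)\to0$ almost surely. I would take $d_*$ to be a weighted series $\sum_j 2^{-j}\min(1,\|(A-B)f_j\|_\infty)$ over a countable dense family $\{f_j\}$ in $C(M)$. It then suffices to prove, for each fixed $f\in C(M)$, that $\sup_{x\in M}|L_nf(x)-L_0f(x)|\to0$ almost surely; a countable intersection of full-measure events finishes the step. Here
\[
L_nf(x)=\frac1n\sum_i k\bigl(d_{g_0}(x,X_i)^2/t\bigr)\bigl(f(x)-f(X_i)\bigr),
\]
or whatever normalisation Definition~\ref{def:discrete_GL} uses. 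This is a uniform law of large numbers over the class $\{y\mapsto k(d_{g_0}(x,y)^2/t)(f(x)-f(y)) : x\in M\}$. The class is uniformly bounded, and it is equicontinuous in $x$, because $k$ is uniformly continuous on the compact range of $d_{g_0}^2/t$ and $f$ is uniformly continuous. A finite $\delta$-net in $x$, the strong law at the net points, and equicontinuity then give the uniform statement (a Glivenko--Cantelli argument). I expect this step to be the main technical obstacle, mainly in matching the exact form of $d_*$. If $d_*$ is instead defined through finitely many test functions or seminorms, the same net argument applies verbatim.

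Given the displayed inequality, $\Phi_t^{\mathrm{int}}(\widehat g_n,\widehat p_n)\to\Phi_t^{\mathrm{int}}(g_0,p_0)$ in $d_*$, almost surely in the exact case or when $\varepsilon_n\to0$ almost surely, and in probability in general. By Theorem~\ref{thm:top_embedding_intrinsic_intro}, $\Phi_t^{\mathrm{int}}$ is a continuous injection from the compact space $\Theta_{\mathrm{int}}$, so its inverse on the image is continuous. This uses the identifiability results, with densities identified as continuous functions, hence exactly rather than only almost everywhere. Continuity of the inverse at $L_0$ gives the following: for every neighbourhood $U$ of $(g_0,p_0)$ in the $C^2\times$ uniform topology there is $\eta>0$ such that $d_*(\Phi_t^{\mathrm{int}}(\theta),L_0)<\eta$ implies $\theta\in U$. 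The almost-sure conclusion follows pathwise. For convergence in probability, $P((\widehat g_n,\widehat p_n)\notin U)\le P(2d_*(L_n,L_0)+\varepsilon_n\ge\eta)\to0$. Measurability of the estimator is assumed, so these events are measurable. The remaining measurability of $d_*(L_n,L_0)$ follows from continuity of the map $(X_1,\dots,X_n)\mapsto L_n$ into the metric $d_*$.
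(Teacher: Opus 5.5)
Your proposal is correct and is essentially the paper's argument. The bound $d_*\bigl(\Phi_t^{\mathrm{int}}(\widehat g_n,\widehat p_n),L_0\bigr)\le 2\,d_*(L_n,L_0)+\varepsilon_n$, almost-sure convergence of $d_*(L_n,L_0)$, and continuity of the inverse of $\Phi_t^{\mathrm{int}}$ on its compact image are exactly what Lemma~\ref{lem:empirical_dstar_convergence} and Proposition~\ref{prop:consistency_MDE} combine. Your direct estimate $\mathbb P\bigl((\widehat g_n,\widehat p_n)\notin U\bigr)\le\mathbb P\bigl(2\,d_*(L_n,L_0)+\varepsilon_n\ge\eta\bigr)$ is a slightly more elementary replacement for the paper's appeal to van der Vaart's Theorem~5.7 in the in-probability case.

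The step you flagged as the main obstacle is easier than you expected. The paper's $d_*$ evaluates $Sf_m$ only at a countable dense set of points $z_q$, so the strong law at each of the countably many pairs $(f_m,z_q)$, plus dominated convergence over the series, already suffices. Your uniform-in-$x$ Glivenko--Cantelli net argument is valid and implies this, but it is not needed.
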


\begin{theorem}[\textbf{Consistent recovery of the extrinsic embedding class and sampling density from fixed-bandwidth graph Laplace operators}]
\label{thm:extrinsic_recovery_intro}
Let $M,t,D$ and the compact class $\Theta_{\mathrm{ext}}$ be as in Theorem~\ref{thm:top_embedding_extrinsic_intro}. Fix $([\iota_0],p_0)\in\Theta_{\mathrm{ext}}$, put $g_0=g_{[\iota_0]}$, and draw $X_1,X_2,\ldots$ independently from $p_0\,\mu_{g_0}$. Let $\ell_0(x,y)=\|\iota_0(x)-\iota_0(y)\|$ and form the empirical operator $L_{n,t}^{\ell_0}$ on $C(M)$ as in Definition~\ref{def:discrete_GL}. Every measurable exact or asymptotic minimum-distance estimator $([\widehat\iota_n],\widehat p_n)$ over $\Theta_{\mathrm{ext}}$, as in Definition~\ref{def:minimum_distance_estimator} and fitted to this operator using $d_*$ and $\Phi_t^{\mathrm{ext}}$, satisfies
\[
d_{\cE}([\widehat\iota_n],[\iota_0])\xrightarrow{\mathbb P}0,\qquad
\|\widehat p_n-p_0\|_\infty\xrightarrow{\mathbb P}0.
\]
Here $d_{\cE}$ induces the quotient $C^2$ topology defined in Definition~\ref{def:rigid_motion_equiv}. For an exact estimator, or an asymptotic estimator whose defining approximation error tends to zero almost surely, both convergences also hold almost surely.
\end{theorem}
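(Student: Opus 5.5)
The proof of Theorem~\ref{thm:extrinsic_recovery_intro} will follow the same three-step scheme as the intrinsic case. The work consists of assembling the extrinsic identifiability result, the topological-embedding property of the forward map, and almost-sure convergence of the empirical operator, and then running the standard minimum-distance argument in the compact parameter space.

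\emph{Step 1: deterministic reduction.} By Theorem~\ref{thm:top_embedding_extrinsic_intro}, the forward map $\Phi_t^{\mathrm{ext}}:([\iota],p)\mapsto L_t^{\ell_\iota,p}$ is injective on $\Theta_{\mathrm{ext}}$, and hence a homeomorphism onto its image for $d_*$. Injectivity uses that $\ell_\iota(x,y)=\|\iota(x)-\iota(y)\|$ depends only on $[\iota]$, so the map is well defined on classes. Compactness of $\Theta_{\mathrm{ext}}$ then upgrades this to the following statement: for every $\varepsilon>0$ there is $\delta>0$ such that
\[
d_*\bigl(\Phi_t^{\mathrm{ext}}(\theta),\Phi_t^{\mathrm{ext}}(\theta_0)\bigr)<\delta
\quad\Longrightarrow\quad
d_{\cE}([\iota],[\iota_0])+\|p-p_0\|_\infty<\varepsilon .
\]
I would prove this by contradiction. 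Take a sequence with $d_*\to0$ but parameter distance at least $\varepsilon$. Extract a convergent subsequence in $\Theta_{\mathrm{ext}}$, pass to the limit using continuity of $\Phi_t^{\mathrm{ext}}$, and conclude from injectivity that the limit is $\theta_0$, which is a contradiction.

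\emph{Step 2: convergence of the empirical operator.} I would show $d_*(L_{n,t}^{\ell_0},\Phi_t^{\mathrm{ext}}(\theta_0))\to0$ almost surely. For a fixed test function $f\in C(M)$ and query point $x$, the quantity $L_{n,t}^{\ell_0}f(x)$ is an empirical average of
\[
y\mapsto k\bigl(\ell_0(x,y)^2/t\bigr)\bigl(f(x)-f(y)\bigr).
\]
These functions are indexed by $x\in M$ and form a uniformly bounded, equicontinuous family, since $\ell_0$ is uniformly continuous on the compact set $M\times M$ and $k$ is continuous. A Glivenko--Cantelli argument over this compact family gives uniform convergence in $x$ almost surely. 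I expect $d_*$ to be defined through a countable dense family of test functions with summable weights, so a countable intersection of full-measure events finishes this step. This is where the metric must be checked carefully against Definition~\ref{def:discrete_GL}.

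\emph{Step 3: the estimator.} By the triangle inequality,
\[
d_*\bigl(\Phi(\widehat\theta_n),\Phi(\theta_0)\bigr)\le d_*\bigl(\Phi(\widehat\theta_n),L_n\bigr)+d_*\bigl(L_n,\Phi(\theta_0)\bigr)\le 2\,d_*\bigl(L_n,\Phi(\theta_0)\bigr)+\eta_n,
\]
where $\eta_n$ is the approximation error of the estimator, with $\eta_n=0$ for an exact estimator. The second inequality uses that $\theta_0\in\Theta_{\mathrm{ext}}$ is a competitor in the minimization. If $\eta_n\to0$ almost surely, Step~2 makes the right-hand side tend to zero almost surely, and Step~1 gives almost-sure convergence of $[\widehat\iota_n]$ and $\widehat p_n$. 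If $\eta_n\to0$ only in probability, the same bound gives convergence in probability. Measurability of the estimator ensures the events involved are measurable.

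I expect the main obstacle to be Step~1's input, namely that $\Phi_t^{\mathrm{ext}}$ is continuous from the quotient $C^2$ topology into $d_*$. This needs continuity of $[\iota]\mapsto\ell_\iota$ uniformly on $M\times M$, together with continuity of $(\iota,p)\mapsto p\,\mu_{g_\iota}$. Both follow from $C^2$ convergence of representatives. That fact is taken from Theorem~\ref{thm:top_embedding_extrinsic_intro}, so in this proof it can be cited.
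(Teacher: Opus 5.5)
Your proposal is correct and has the same architecture as the paper: almost-sure $d_*$-convergence of the empirical operator, the compact-class embedding of Theorem~\ref{thm:top_embedding_extrinsic_intro}, and then a minimum-distance argument. Two sub-steps are handled differently.

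\emph{Step 2.} You use a Glivenko--Cantelli argument to get, for each fixed $f$, convergence uniformly in $x\in M$. This is valid, since $\{y\mapsto k(\ell_0(x,y)^2/t)(f(x)-f(y)) : x\in M\}$ is a continuous image of compact $M$ in sup norm and so has finite $\varepsilon$-nets. It is more than needed, however. The metric $d_*$ only probes the countably many pairs $(f_m,z_q)$, so the paper's Lemma~\ref{lem:empirical_dstar_convergence} just applies the strong law at each pair and intersects countably many full-measure events.

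\emph{Step 3.} You use the single bound $d_*(\Phi(\widehat\theta_n),T_0)\le 2d_*(L_{n,t}^{\ell_0},T_0)+\eta_n$, together with the compactness-derived $\varepsilon$--$\delta$ inverse continuity at $\theta_0$, for \emph{both} modes of convergence. The paper uses that bound only for the almost-sure claim. For the in-probability claim it goes through the argmax theorem (Theorem~5.7 of van der Vaart) inside Proposition~\ref{prop:consistency_MDE}. Your route is more elementary and self-contained. It also shows that the argmax machinery is unnecessary here: up to a null set, $\{d_\Theta(\widehat\theta_n,\theta_0)\ge\varepsilon\}\subset\{2d_*(L_{n,t}^{\ell_0},T_0)+\eta_n\ge\delta\}$, and the right-hand probability tends to zero.

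A small point of wording in Step 1. The fact that $\ell_\iota$ depends only on $[\iota]$ makes $\Phi_t^{\mathrm{ext}}$ well defined on classes; it does not give injectivity. Injectivity comes from Theorem~\ref{thm:extrinsic_joint_identifies_metric_and_density} together with the extension of the distance-preserving map $\iota_1(M)\to\iota_2(M)$ to a rigid motion. Both are already contained in the cited Theorem~\ref{thm:top_embedding_extrinsic_intro}, so this does not affect your argument.
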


The route to Theorems~\ref{thm:intrinsic_recovery_intro} and \ref{thm:extrinsic_recovery_intro} is as follows. Section~\ref{scn:prelims} defines the continuous and empirical operators on $C(M)$, the weak operator metric $d_*$, and minimum-distance estimators. Section~\ref{sec:population} states population identifiability, proved in Appendix~\ref{sec:proofs}. Section~\ref{sec:forwardmaps} then uses identifiability and compactness to obtain topological embeddings of the intrinsic and extrinsic forward maps, proved in Appendix~\ref{app:forwardmaps}; these embeddings provide the separation of the true parameter from alternatives needed for consistency. Section~\ref{sec:consistency} states almost-sure convergence of the actual empirical operators in $d_*$ and a subordinate approximate minimum-distance consistency proposition. Appendix~\ref{app:empirical} proves those two results and combines them with the embeddings in separate proofs of Theorems~\ref{thm:intrinsic_recovery_intro} and \ref{thm:extrinsic_recovery_intro}. Proofs of the preliminary operator-metric propositions are in Appendix~\ref{app:prelims}.

These conclusions concern measurable estimators and do not require uniqueness at finite $n$. They use the empirical operators evaluated at arbitrary $x\in M$ as defined in Section~\ref{scn:prelims}; an observed $n\times n$ graph Laplacian matrix alone does not specify that extension. In the intrinsic theorem the distances $d_{g_0}(x,X_j)$ used to form it must be available. 

\section{Preliminaries}\label{scn:prelims}

Let $M$ be a compact smooth $d$-dimensional manifold without boundary, where $d\geq1$, and let $g$ be a continuous Riemannian metric with volume measure $\mu_g=\mathrm{dvol}_g$. Let $p$ be a measurable probability density on $M$ with respect to $\mu_g$, and let $(X_i)_{i=1}^n$ be i.i.d.\ $M$--valued samples drawn from $p\,\mu_g$. Fix a bandwidth $t>0$. Throughout, the same known profile $k:[0,\infty)\to(0,\infty)$ is continuous and strictly decreasing, with $k(0)>0$; no differentiability of $k$ is assumed. We set $k(+\infty):=0$ only to assign zero weight to points in different connected components, where $d_g(x,y)=+\infty$. Whenever a possibly extended distance $\ell$ is used below, we require $(x,y)\mapsto k(\ell(x,y)^2/t)$ to be continuous on $M\times M$; the intrinsic and extrinsic distances considered here have this property. The Gaussian profile is $k(r)=e^{-r}$. A hard threshold profile does not meet these assumptions: its flat part can hide the metric, and its empirical operator need not map $C(M)$ into $C(M)$. We first define the unnormalized discrete graph Laplace operator.

\begin{definition}[\textbf{Unnormalized discrete graph Laplace operator}]
\label{def:discrete_GL}
Let $\ell$ be a continuous (possibly extended) distance on $M$, with the convention $k(+\infty)=0$. The corresponding unnormalized discrete graph Laplace operator is the operator on $C(M)$ defined at every $x\in M$ by
\begin{equation}\label{eq:discrete_GL}
(L_{n,t}^{\ell} f)(x)
:= \frac{1}{n\,t^{d/2+1}} \sum_{j=1}^n k\!\left(\frac{\ell(x,X_j)^2}{t}\right)\bigl(f(x)-f(X_j)\bigr), \qquad f \in C(M).
\end{equation}
\end{definition}

This definition uses the distance from each query point $x$ to every sample $X_j$; the observed $n\times n$ graph Laplacian matrix alone does not specify this out-of-sample operator. The consistency claims below concern the operator in \eqref{eq:discrete_GL} when that extension is available.

For each fixed $f\in C(M)$ and $x\in M$, the strong law gives $L_{n,t}^{\ell}f(x)\to L_t^{\ell}f(x)$ almost surely; simultaneous convergence in the metric $d_*$ is proved in Section~\ref{sec:consistency}.

\begin{definition}[\textbf{Unnormalized continuous graph Laplace operator}]
\label{def:continuous_GL}
Let $\ell$ be a continuous (possibly extended) distance on $M$. The corresponding unnormalized continuous graph Laplace operator is defined by
\begin{equation}\label{eq:continuous_GL}
L_t^{\ell} f(x)
:= \frac{1}{t^{d/2+1}} \int_M k\!\left(\frac{\ell(x,y)^2}{t}\right)\bigl(f(x)-f(y)\bigr) p(y)\,d\mu_g(y), \qquad f \in C(M).
\end{equation}
\end{definition}

\begin{definition}[\textbf{Intrinsic discrete and continuous graph Laplace operators}]
\label{def:intrinsic_continuous_GL}
If $\ell:=d_g$, the intrinsic distance induced by $g$, then we call 
\eqref{eq:discrete_GL} and \eqref{eq:continuous_GL} the intrinsic discrete and intrinsic continuous graph Laplace operators respectively, and denote them by $L^{(g)}_{n,t}, L^{(g)}_{t,p}$, namely
\[
(L^{(g)}_{t,p} f)(x)
:=\frac{1}{t^{\frac d2+1}}\int_M k\!\left(\frac{d_g(x,y)^2}{t}\right)\bigl(f(x)-f(y)\bigr)\,p(y)\,d\mu_g(y),
\qquad f\in C(M).
\]
\end{definition}

For practical purposes, $d_g$ may be difficult to estimate, so in manifold learning one often works with an extrinsic distance coming from an embedding.

\begin{definition}[\textbf{Extrinsic discrete and continuous graph Laplace operators}]
\label{def:extrinsic_continuous_GL}
Let $\iota:M\to\mathbb{R}^D$ be a $C^2$ embedding. If
\[
\ell(x,y):=\norm{\iota(x)-\iota(y)}_{\mathbb{R}^D},
\]
then we call \eqref{eq:discrete_GL} and \eqref{eq:continuous_GL} the extrinsic discrete and extrinsic continuous graph Laplace operators respectively, and denote them by $L^{\mathrm{ext}}_{n,t}(g;\iota),L^{\mathrm{ext}}_{t,p}(g;\iota)$, namely
\begin{equation}
\label{eq:extrinsic_L}
(L^{\mathrm{ext}}_{t,p}(g;\iota)f)(x)
:=\frac{1}{t^{\frac d2+1}}\int_M k\!\left(\frac{\|\iota(x)-\iota(y)\|^2}{t}\right)\bigl(f(x)-f(y)\bigr)\,p(y)\,d\mu_g(y),
\qquad f\in C(M).
\end{equation}
Of special importance is the case where $g$ is induced by $\iota$.
\end{definition}

The associated integral operators are obtained by dropping the diagonal term.

\begin{definition}[\textbf{Intrinsic integral operator}]
\label{def:intrinsic_integral_operator}
The intrinsic integral operator associated with $L^{(g)}_{t,p}$ is defined by
\begin{equation}
\label{eq:intrinsic_integral_operator}
(T^{(g)}_{t,p}f)(x)
:=\int_M k\!\left(\frac{d_g(x,y)^2}{t}\right)f(y)p(y)\,d\mu_g(y).
\end{equation}
\end{definition}

\begin{definition}[\textbf{Extrinsic integral operator}]
\label{def:extrinsic_integral_operator}
The extrinsic integral operator associated with $L^{\mathrm{ext}}_{t,p}(g;\iota)$ is defined by
\begin{equation}
\label{eq:extrinsic_integral_operator}
(T^{\mathrm{ext}}_{t,p}(g;\iota)f)(x)
:=\int_M k\!\left(\frac{\|\iota(x)-\iota(y)\|^2}{t}\right)f(y)p(y)\,d\mu_g(y).
\end{equation}
\end{definition}

For each fixed $t>0$, continuous Riemannian metric $g$ on $M$, and measurable probability density $p$, the operator $T^{(g)}_{t,p}$ is a bounded linear map from $C(M)$ to $C(M)$. The same holds for $T^{\mathrm{ext}}_{t,p}(g;\iota)$ when $\iota:M\to\mathbb{R}^D$ is a $C^2$ embedding; in particular, $g$ may be the $C^1$ metric induced by $\iota$.
The forward maps below associate a parameter with its graph Laplace operator, expressed through these integral operators.

\begin{definition}[\textbf{Intrinsic forward map}]
\label{def:intrinsic_forward_map}
For each fixed $t>0$, the intrinsic forward map sends a metric-density parameter pair $(g,p)$ to the corresponding intrinsic continuous graph Laplace operator:
\begin{equation}
\label{eq:intrinsic_forward_map}
\mathcal{F}^{\mathrm{int}}_t:(g,p)\longmapsto L^{(g)}_{t,p}\in \mathcal{L}(C(M),C(M)).
\end{equation}
\end{definition}

\begin{definition}[\textbf{Extrinsic forward map}]
\label{def:extrinsic_forward_map}
For each fixed $t>0$, the extrinsic forward map sends a parameter tuple $(g,p,\iota)$ to the corresponding extrinsic continuous graph Laplace operator; when $g$ is induced by $\iota$, it descends to rigid-motion classes $([\iota],p)$:
\begin{equation}
\label{eq:extrinsic_forward_map}
\mathcal{F}^{\mathrm{ext}}_t:(g,p,\iota)\longmapsto L^{\mathrm{ext}}_{t,p}(g;\iota)\in \mathcal{L}(C(M),C(M)).
\end{equation}
\end{definition}

For the consistency result below, the operator norm topology on $\cL(C(M),C(M))$ is too strong for the empirical convolution operators naturally associated with graph Laplacians. We therefore work with the following weaker metric defined by $d_*$.

\begin{definition}[\textbf{Weak metric on $ \cL(C(M),C(M))$}]\label{def:weak-metric}
   Since $M$ is compact, the Banach space $C(M)$ is separable. Fix once and for all a countable dense subset $\{f_m\}_{m\ge 1}$ of the unit ball $\{f\in C(M):\norm{f}_\infty\le 1\}$, and a countable dense subset $\{z_q\}_{q\ge 1}$ of $M$. For $S,T\in \cL(C(M),C(M))$, define
\[
d_{*}(S,T):=\sum_{m=1}^{\infty}\sum_{q=1}^{\infty}2^{-(m+q)}\min\Bigl\{1,\bigl|Sf_m(z_q)-Tf_m(z_q)\bigr|\Bigr\}.
\]
 
\end{definition}

\begin{definition}[\textbf{Exact and asymptotic minimum-distance estimators w.r.t. the weak metric }]
\label{def:minimum_distance_estimator}
Let $\Theta$ be a compact subset of a model class, let $\Phi:\Theta\to\cL(C(M),C(M))$ be a map, and let $d_*$ be the operator metric in Definition \ref{def:weak-metric}. Fix a \textit{true parameter} $\theta_0\in\Theta$ and write $T_0:=\Phi(\theta_0)$ for its \textit{true operator}.

For each $n\ge 1$, let $\widehat T_n$ be a $\cL(C(M),C(M))$-valued random element built from the data $X_1,\dots,X_n$. The main examples in this paper are the intrinsic and extrinsic discrete graph Laplace operators at fixed $t>0$.

Define the \textit{criterion function} $Q_n(\theta):=d_*(\widehat T_n,\Phi(\theta))$ for $\theta\in\Theta$.

A $\Theta$-valued random element $\widehat\theta_n$ is an \emph{exact minimum-distance estimator w.r.t. the weak metric $d_*$} if $$Q_n(\widehat\theta_n)=\inf_{\theta\in\Theta}Q_n(\theta).$$ It is an \emph{asymptotic minimum-distance estimator w.r.t. the weak metric $d_*$} if
\[
\inf_{\theta\in\Theta}Q_n(\theta) \le Q_n(\widehat\theta_n)\le \inf_{\theta\in\Theta}Q_n(\theta)+r_n,
\]
where $r_n$ is a nonnegative random variable with $r_n\to 0$ in probability, or almost surely, according to the mode of convergence being considered. The defining inequality is required to hold almost surely for each $n$.
\end{definition}

\begin{proposition}
\label{prop:dstar_metric}
The function $d_{*}$ is a metric on $\cL(C(M),C(M))$.
\end{proposition}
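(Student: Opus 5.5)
We verify the metric axioms directly from Definition~\ref{def:weak-metric}, with positive definiteness the only step that uses the structure of $\cL(C(M),C(M))$.

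\emph{Well-definedness, symmetry, triangle inequality.} Each summand satisfies $0\le 2^{-(m+q)}\min\{1,|Sf_m(z_q)-Tf_m(z_q)|\}\le 2^{-(m+q)}$. Since $\sum_{m,q\ge1}2^{-(m+q)}=1$, the double series converges absolutely, and $0\le d_*(S,T)\le 1$. Symmetry holds termwise.

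For the triangle inequality, note that $\phi(s)=\min\{1,s\}$ is nondecreasing and subadditive on $[0,\infty)$. Hence, for $a,b,c\in\mathbb R$,
\[
\min\{1,|a-c|\}\le\min\{1,|a-b|\}+\min\{1,|b-c|\}.
\]
Apply this with $a=Sf_m(z_q)$, $b=Rf_m(z_q)$ and $c=Tf_m(z_q)$, multiply by $2^{-(m+q)}$, and sum.

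\emph{Positive definiteness.} Suppose $d_*(S,T)=0$ and put $U=S-T$, which is bounded and linear. Every term vanishes, so $Uf_m(z_q)=0$ for all $m,q$.

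First fix $m$. The function $Uf_m$ lies in $C(M)$ and vanishes on the dense set $\{z_q\}$, so by continuity $Uf_m\equiv0$ on $M$.

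Next take any $f$ in the unit ball. Choose $f_{m_j}\to f$ uniformly, which is possible by density of $\{f_m\}$ in the unit ball. Boundedness gives $\|Uf-Uf_{m_j}\|_\infty\le\|U\|\,\|f-f_{m_j}\|_\infty\to0$, so $Uf=0$.

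Finally, linearity extends this from the unit ball to all of $C(M)$ by scaling. Thus $U=0$, that is, $S=T$.

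The only point needing care is this last step. Density must be used twice: once in $M$, via continuity of $Uf_m$, and once in the unit ball of $C(M)$, via boundedness of $U$. The existence of the countable dense sets is given in the definition, since $C(M)$ is separable for compact metrizable $M$.
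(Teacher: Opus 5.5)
Your proof is correct and follows the paper's argument essentially verbatim. Density of $\{z_q\}$ plus continuity gives $Sf_m=Tf_m$; density of $\{f_m\}$ in the unit ball plus boundedness extends this to the unit ball; homogeneity finishes. The only addition is that you justify the triangle inequality via monotonicity and subadditivity of $s\mapsto\min\{1,s\}$, which the paper simply calls immediate.
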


\begin{proposition}
\label{prop:embedding_for_dstar}
If $\Phi:\Theta\to \cL(C(M),C(M))$ is continuous for the operator norm topology and injective, where $\Theta$ is compact, then $\Phi:\Theta\to (\cL(C(M),C(M)),d_{*})$ is a topological embedding.
\end{proposition}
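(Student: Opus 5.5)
The plan is to show that $\Phi$ is continuous into $(\cL(C(M),C(M)),d_*)$, and then to invoke the standard fact that a continuous injection from a compact space into a Hausdorff space is a closed map, hence a homeomorphism onto its image. By Proposition~\ref{prop:dstar_metric}, $d_*$ is a metric, so the target is Hausdorff and only continuity needs checking.

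\textbf{Continuity.} It suffices to show that $d_*(S,T)\le \|S-T\|_{\mathrm{op}}$. For every $m$ and $q$ we have $\|f_m\|_\infty\le 1$, so
\[
|Sf_m(z_q)-Tf_m(z_q)|\le \|(S-T)f_m\|_\infty\le \|S-T\|_{\mathrm{op}}.
\]
Hence each summand is at most $2^{-(m+q)}\|S-T\|_{\mathrm{op}}$. Summing over $m,q\ge1$, with $\sum_{m,q\ge1} 2^{-(m+q)}=1$, gives the bound. The identity map from the operator-norm topology to the $d_*$ topology is therefore $1$-Lipschitz. Composing it with the norm-continuous map $\Phi$ shows that $\Phi$ is $d_*$-continuous.

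\textbf{Closedness.} Let $C\subset\Theta$ be closed. Then $C$ is compact, being a closed subset of a compact space. Its image $\Phi(C)$ is compact in $(\cL,d_*)$ because $\Phi$ is continuous, and it is therefore closed in the metric space $(\cL,d_*)$, hence closed in $\Phi(\Theta)$. Thus $\Phi:\Theta\to\Phi(\Theta)$ is a continuous closed bijection, so its inverse is continuous and $\Phi$ is a topological embedding.

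If $\Theta$ is metrizable, as it is in our applications, the same argument can be phrased with sequences. Suppose $d_*(\Phi(\theta_n),\Phi(\theta))\to0$. Any subsequence of $(\theta_n)$ has a further subsequence converging to some $\theta'$. Continuity then gives $\Phi(\theta')=\Phi(\theta)$, and injectivity gives $\theta'=\theta$.

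There is no real obstacle here. The only point to note is that the argument relies on $d_*$ being a genuine metric, so that compact sets are closed and limits are unique. That fact is exactly Proposition~\ref{prop:dstar_metric}, which uses the density of $\{f_m\}$ and $\{z_q\}$ together with the continuity of $Sf$.
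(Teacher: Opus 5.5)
Your proof is correct and follows essentially the same route as the paper. The paper likewise bounds $d_*(S,T)\le\min\{1,\|S-T\|_{C(M)\to C(M)}\}$ to get continuity into $(\cL(C(M),C(M)),d_*)$, then applies the compact-to-Hausdorff criterion, using that $d_*$ is a metric (Proposition~\ref{prop:dstar_metric}).
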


%%%%%%%%%%%%%%%%%%%%%%%%%%%%%%%%%%%%%%%%%%%%%%%%%%%%%%%%%%%%%%%%%%%%%%%%%%%%%%%%%%%%%%%%%%%%%%%%%%%%%%%%%%%%%%%%%%%%%%%%%%%%%%%%%%%%%%%%%%%%%%%%%%%%%%%%%%%%%%%%%%%%%%%%%%%%%%%%%%%%%%%%%%%%%%%%%%%%%%%%%%%%%%%%
\section{Population-level identifiability of graph Laplacians}\label{sec:population}
The following results concern the population continuous graph Laplace operators of Definitions~\ref{def:intrinsic_continuous_GL} and \ref{def:extrinsic_continuous_GL}. Their proofs are in Appendix~\ref{sec:proofs}.

\begin{theorem}[\textbf{Intrinsic operator identifies the Riemannian metric (for fixed bandwidth and positive density)}]
\label{thm:intrinsic_metric_identifiable}
Let $M$ be a smooth compact manifold without boundary, $\dim M=d\geq1$. Fix $t>0$ and let $g_1,g_2$ be $C^2$ Riemannian metrics on $M$. Let $p$ be the same measurable function for both metrics, with $p>0$ $\mu_{g_i}$-almost everywhere and $\int_M p\,d\mu_{g_i}=1$ for $i=1,2$. Define the intrinsic continuous graph Laplace operators as in Definition~\ref{def:intrinsic_continuous_GL}. If
\[
L^{(g_1)}_{t,p}=L^{(g_2)}_{t,p}\quad\text{as operators on }C(M),
\]
then $g_1=g_2$.
\end{theorem}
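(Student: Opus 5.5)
Starting from $L^{(g_1)}_{t,p}=L^{(g_2)}_{t,p}$, I will first separate the multiplication part of the operator from its integral part. Write $\mu_i:=\mu_{g_i}$ and $k_i(x,y):=k(d_{g_i}(x,y)^2/t)$. Then
\[
L^{(g_i)}_{t,p}f(x)=t^{-d/2-1}\bigl(m_i(x)f(x)-T^{(g_i)}_{t,p}f(x)\bigr),
\qquad m_i(x):=\int_M k_i(x,y)p(y)\,d\mu_i(y).
\]
Applying both operators to $f\equiv1$ gives $0=0$, so this test carries no information. Instead I will fix $x_0\in M$ and test with a nonnegative $f\in C(M)$ supported in a small ball $B$ around a point $y_0\neq x_0$, with $x_0\notin \overline B$. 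Then $f(x_0)=0$, and equality of the operators at $x_0$ gives
\[
\int_M k_1(x_0,y)f(y)p(y)\,d\mu_1(y)=\int_M k_2(x_0,y)f(y)p(y)\,d\mu_2(y)
\]
for all such $f$. Since $f$ is arbitrary continuous with support away from $x_0$, the two measures $k_i(x_0,\cdot)\,p\,\mu_i$ agree on $M\setminus\{x_0\}$. A point has measure zero for both, so they agree on $M$. Write $\mu_2=\rho\,\mu_1$ with $\rho=\sqrt{\det g_2/\det g_1}$, which is continuous and positive. Because $p>0$ almost everywhere, we get
\[
k_1(x_0,y)=k_2(x_0,y)\rho(y)\quad\text{for }\mu_1\text{-a.e. } y.
\]
Both sides are continuous in $y$ (the profile $k$ is continuous and the distances are continuous), so the identity holds for every $y$, and for every $x_0$.

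Next I will eliminate $\rho$. Put $y=x_0$: since $d_{g_i}(x,x)=0$, we get $k(0)=k(0)\rho(x)$, and $k(0)>0$ forces $\rho\equiv1$. Hence $k(d_{g_1}(x,y)^2/t)=k(d_{g_2}(x,y)^2/t)$ for all $x,y$. The profile $k$ is strictly decreasing, hence injective on $[0,\infty]$ with $k(+\infty)=0<k(r)$, so $d_{g_1}=d_{g_2}$ as extended distances. In particular the two metrics have the same connected components.

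Finally I will pass from equal distance functions to equal metric tensors. For $v\in T_xM$, take a smooth curve $\gamma$ with $\gamma(0)=x$ and $\gamma'(0)=v$. Because $g_i$ is $C^2$ (continuity already suffices here), the standard first-variation fact gives $d_{g_i}(x,\gamma(s))/s\to |v|_{g_i}$ as $s\to0^+$. One can see this in normal coordinates, or by comparing $g_i$ on a small chart with a constant metric up to a factor $1\pm\varepsilon$. Therefore $|v|_{g_1}=|v|_{g_2}$ for all $v$, and polarization gives $g_1=g_2$. Alternatively, one can invoke Myers--Steenrod: a distance-preserving map between $C^2$ Riemannian manifolds is a Riemannian isometry, applied to the identity map.

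The main obstacle is the first step, isolating the off-diagonal kernel from the operator identity. This requires a careful measure-theoretic argument that two signed Radon measures which agree against all continuous functions vanishing near $x_0$ are equal. It also requires attention to $p$ being merely measurable: the passage from the a.e. identity to a pointwise identity works only because I divide out the positive $p$ before invoking continuity, and continuity is then used only for the kernels and for $\rho$. The metric-recovery step is classical.
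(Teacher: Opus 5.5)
Your proposal is correct and follows essentially the same route as the paper. It splits off the diagonal multiplication term, uses that the measures $k(d_{g_i}(x,\cdot)^2/t)\,p\,\mu_{g_i}$ have no atoms to equate the integral parts, divides by $p$, and passes to a pointwise identity via continuity and full support; setting $y=x$ forces the volume ratio to be $1$, and strict monotonicity of $k$ plus the fact that a $C^2$ metric is determined by its distance function finishes. The only cosmetic difference is in the first step: the paper writes the difference of measures as $a(x)\delta_x$ via the Riesz representation theorem and evaluates it on $\{x\}$, whereas you test only with functions vanishing near $x_0$ and then invoke atomlessness, which amounts to the same argument.
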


\begin{theorem}[\textbf{Intrinsic operator identifies the density up to almost everywhere equality (for fixed bandwidth and $C^2$ Riemannian metric)}]
\label{thm:intrinsic_density_identifiable}
Let $M$ be a compact smooth manifold without boundary of dimension $d\geq1$, and let $g$ be a $C^2$ Riemannian metric on $M$. Fix $t>0$. Let $p_1,p_2$ be measurable densities satisfying $p_j>0$ $\mu_g$-almost everywhere. Define the intrinsic continuous graph Laplace operators as in Definition~\ref{def:intrinsic_continuous_GL}. If the intrinsic continuous graph Laplace operators are equal:
\[
L^{(g)}_{t,p_1}=L^{(g)}_{t,p_2}\quad\text{as operators on }C(M),
\]
then $p_1=p_2$ $\mu_g$-almost everywhere.
\end{theorem}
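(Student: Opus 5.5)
We decompose the operator into a multiplication part and an integral part:
\[
L^{(g)}_{t,p}f = t^{-\frac d2-1}\bigl(m_p f - T^{(g)}_{t,p}f\bigr),
\qquad
m_p(x):=\int_M k\!\left(\frac{d_g(x,y)^2}{t}\right)p(y)\,d\mu_g(y) = T^{(g)}_{t,p}\mathbf 1(x).
\]
Since the metric $g$ is the same on both sides, the operators $L^{(g)}_{t,p_1}$ and $L^{(g)}_{t,p_2}$ differ only through the measures $p_1\mu_g$ and $p_2\mu_g$. Apply the equality $L^{(g)}_{t,p_1}f=L^{(g)}_{t,p_2}f$ and evaluate at a point $x$. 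Writing $q:=p_1-p_2\in L^1(\mu_g)$, we get for every $f\in C(M)$ and $x\in M$
\[
\int_M k\!\left(\frac{d_g(x,y)^2}{t}\right)\bigl(f(x)-f(y)\bigr)q(y)\,d\mu_g(y)=0 .
\]

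The plan is to localize using the freedom in $f$, rather than invert a kernel operator. Fix $x$ and choose $f\in C(M)$ with $f(x)=0$. The identity then becomes
\[
\int_M k\!\left(\frac{d_g(x,y)^2}{t}\right) f(y)\,q(y)\,d\mu_g(y)=0 .
\]
This holds for every $f\in C(M)$ vanishing at $x$. Such functions are still rich enough to test the signed measure $\nu_x:=k(d_g(x,\cdot)^2/t)\,q\,\mu_g$, which is a finite signed Borel measure because $k$ is bounded and $q\in L^1$.

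Take any $h\in C(M)$ and a sequence $\phi_j\in C(M)$ with $0\le\phi_j\le1$, $\phi_j(x)=0$, and $\phi_j\to 1$ pointwise on $M\setminus\{x\}$. Apply the identity to $f=h\phi_j$ and pass to the limit by dominated convergence. Because $\mu_g(\{x\})=0$ (we have $d\ge1$ and $\mu_g$ is absolutely continuous), this gives $\int h\,d\nu_x=0$ for all $h\in C(M)$. By the Riesz representation theorem, $\nu_x=0$.

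The kernel $k(d_g(x,y)^2/t)$ is strictly positive wherever $d_g(x,y)<\infty$, that is, on the connected component of $x$. Hence $q=0$ $\mu_g$-a.e.\ on that component. Letting $x$ range over one point in each of the finitely many components gives $p_1=p_2$ $\mu_g$-a.e.

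Two points are worth checking, although neither should be a real obstacle:
\begin{itemize}
\item The argument only needs $d_g(x,\cdot)$ to be continuous into $[0,\infty]$ and $k(d_g^2/t)$ to be bounded and positive on components. We do not need $C^2$ regularity, strict monotonicity of $k$, or the positivity $p_j>0$. These hypotheses are simply inherited from the setting, since positivity matters only for the metric-identifiability statement, Theorem~\ref{thm:intrinsic_metric_identifiable}.
\item Since $M$ is compact, it has finitely many components, each open. On the component of $x$ the kernel is bounded below by $k(\mathrm{diam}^2/t)>0$, so no degeneracy arises.
\end{itemize}

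An alternative, shorter route takes $f\equiv1$ to get $m_{p_1}=m_{p_2}$, so that $T^{(g)}_{t,p_1}=T^{(g)}_{t,p_2}$. One would then test with $f$ localized away from $x$. The delicate part of that route is passing from vanishing tested only against continuous functions to vanishing almost everywhere, which is exactly what the $\phi_j$ cutoff argument handles cleanly.
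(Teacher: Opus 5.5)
Your argument is correct and is essentially the paper's. Both show that the signed measure $k(d_g(x,\cdot)^2/t)(p_1-p_2)\,\mu_g$ can differ from zero only by a multiple of $\delta_x$, and it has no atom at $x$, so it vanishes. The paper writes the difference of measures as $a(x)\delta_x$ and evaluates on $\{x\}$; you kill the $f(x)$ term and remove the cutoff $\phi_j$ by dominated convergence. Both then use positivity of the kernel on each component.

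One correction to your closing aside: $L^{(g)}_{t,p}\mathbf 1=0$ for every $p$, so $f\equiv1$ gives no information about $m_p$. Obtaining equality of the multiplication parts is exactly what the atom argument is needed for.
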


\begin{theorem}[\textbf{Intrinsic operator identifies the pair of Riemannian metric and density (for fixed bandwidth)}]
\label{thm:intrinsic_joint_identifiable}
Let $M$ be a compact smooth manifold without boundary of dimension $d\geq1$. Fix $t>0$. Let $g_1,g_2$ be $C^2$ Riemannian metrics on $M$, and let $p_1,p_2$ be measurable densities satisfying $p_1>0$ $\mu_{g_1}$-almost everywhere and $p_2>0$ $\mu_{g_2}$-almost everywhere. Define the intrinsic continuous graph Laplace operators as in Definition~\ref{def:intrinsic_continuous_GL}. If the intrinsic continuous graph Laplace operators are equal, i.e.
\[
L^{(g_1)}_{t,p_1}=L^{(g_2)}_{t,p_2}\quad\text{as operators on }C(M),
\]
then $g_1=g_2$ and $p_1=p_2$ $\mu_g$-almost everywhere with respect to the common volume measure $\mu_g$.
\end{theorem}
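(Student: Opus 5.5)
The plan is to reduce the joint statement to Theorems~\ref{thm:intrinsic_metric_identifiable} and \ref{thm:intrinsic_density_identifiable}. Those results each keep one parameter fixed, so we cannot cite them directly. The idea is to extract from the operator an object that depends on $(g,p)$ only through the product measure, and then separate the geometry from the weight.

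First, decompose $L^{(g_i)}_{t,p_i} f = t^{-d/2-1}\bigl(f\cdot T_i\mathbf 1 - T_i f\bigr)$, where $T_i:=T^{(g_i)}_{t,p_i}$. Apply the operator identity to $f\equiv\mathbf 1$; it gives $0=0$ and carries no information. Next apply it to $f$ and to $\phi f$ for arbitrary $\phi,f\in C(M)$. The identity then says that the kernel measures
\[
\nu^i_x(dy):=k\bigl(d_{g_i}(x,y)^2/t\bigr)p_i(y)\,d\mu_{g_i}(y)
\]
satisfy $\int (f(x)-f(y))\,d\nu^1_x=\int (f(x)-f(y))\,d\nu^2_x$ for all $f$ and $x$. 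For fixed $x$, choose $f\in C(M)$ with $f(x)=0$ and $f$ otherwise arbitrary. This yields $\int f\,d\nu^1_x=\int f\,d\nu^2_x$ for all such $f$. Since the measures $\nu^i_x$ are finite and absolutely continuous, they carry no atom at $x$, so $\nu^1_x=\nu^2_x$ as measures for every $x$.

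Second, compare densities. Both $\mu_{g_1}$ and $\mu_{g_2}$ are smooth positive multiples of a fixed Lebesgue-class measure in charts, so write $d\mu_{g_2}=\rho\,d\mu_{g_1}$ with $\rho$ continuous and positive. The equality $\nu^1_x=\nu^2_x$ then becomes
\[
k\bigl(d_{g_1}(x,y)^2/t\bigr)p_1(y)=k\bigl(d_{g_2}(x,y)^2/t\bigr)p_2(y)\rho(y)
\]
for $\mu_{g_1}$-a.e.\ $y$, for every $x$. Set $q:=p_2\rho/p_1$, which is positive a.e. The identity says that $k(d_{g_1}(x,y)^2/t)/k(d_{g_2}(x,y)^2/t)=q(y)$ a.e.\ in $y$. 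Apply it with $x$ replaced by $y$ and $y$ by $x$, which is allowed by the symmetry of distances once we pass to a version that is continuous in the pair, and compare. Call the left side $R(x,y)$; it is continuous, symmetric and positive where finite. From the identity, $R(x,y)=q(y)$ for a.e.\ $(x,y)$, and by symmetry also $R(x,y)=q(x)$. Hence $q(x)=q(y)$ for a.e.\ pair, so $q$ is a.e.\ a constant $c$ on each component. On the diagonal $R(x,x)=k(0)/k(0)=1$, and continuity of $R$ near the diagonal forces $c=1$. The set where components differ needs care: if $d_{g_1}(x,y)=+\infty$, the convention $k(+\infty)=0$ shows that the components of both metrics coincide, because the supports of $\nu^i_x$ agree.

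With $q=1$ we get $p_2\,d\mu_{g_2}=p_1\,d\mu_{g_1}$ and $k(d_{g_1}^2/t)=k(d_{g_2}^2/t)$ everywhere by continuity. Strict monotonicity of $k$ then gives $d_{g_1}=d_{g_2}$. By the Myers--Steenrod theorem (for $C^2$ metrics, a distance-preserving identity map is a $C^1$ isometry, and the metric is recovered from $d_g$ via $g(v,v)=\lim_{s\to0} d_g(x,\gamma(s))^2/s^2$ along curves), $g_1=g_2$. Then $\mu_{g_1}=\mu_{g_2}$, so $\rho=1$ and $p_1=p_2$ a.e. Equivalently, once $g_1=g_2$ is known, Theorem~\ref{thm:intrinsic_density_identifiable} finishes the argument.

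The main obstacle is the second step: making the a.e.\ identities in $y$, which hold for every $x$, into a genuinely two-variable statement so that the symmetry argument is legitimate. I would handle this by integrating the measure identity against product test functions $\psi(x)f(y)$. This gives equality of the measures $k(d_{g_i}^2/t)\,p_i(y)\,d\mu_{g_i}(y)\,d\mu_{g_1}(x)$ on $M\times M$. Then a Fubini swap in $(x,y)$, together with the symmetry of the kernels, makes the separation $R=q(y)=q(x)$ rigorous.
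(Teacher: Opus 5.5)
Your proof is correct and follows the paper's outline. You reduce to equality of the kernel measures $\nu^1_x=\nu^2_x$, write $d\mu_{g_2}=\rho\,d\mu_{g_1}$, and show the ratio $q=p_2\rho/p_1$ equals $1$. You then get $K_1=K_2$, hence $d_{g_1}=d_{g_2}$, and recover $g$ from the first-order formula $g(v,v)=\lim_{s\to0}d_g(x,\gamma(s))^2/s^2$, which is exactly Lemma~\ref{lem:metric_determined_by_distance}.

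You differ from the paper at the crux, the proof that $q\equiv1$. The paper never passes to $M\times M$. It takes a countable dense set $\{x_n\}$ and discards the union of the corresponding $y$-null sets. Continuity of $x\mapsto K_i(x,y)$ then gives $K_1(x,y)=K_2(x,y)r(y)$ for \emph{every} $x$ at each good $y$, and setting $x=y$ reads off $r(y)=1$.

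You instead lift to a $\mu_{g_1}\otimes\mu_{g_1}$-a.e.\ identity. Fubini and the symmetry $K_i(x,y)=K_i(y,x)$ give $q(x)=q(y)$ for a.e.\ pair, so $q$ is a.e.\ constant on each component. Continuity of $R=K_1/K_2$ up to the diagonal then forces the constant to be $1$. This is rigorous: the exceptional set is jointly measurable with null $y$-sections, and the swap preserves the product measure.

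Your route costs more than the paper's. It needs joint measurability, Fubini, a separate constancy argument, and symmetry of the kernel. The paper's diagonal-evaluation trick works for any kernel that is continuous in $x$ and positive on the diagonal. What your route adds is the visible intermediate fact that the Radon--Nikodym ratio is forced to be constant, though the proof does not need it.

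Two minor points:
\begin{itemize}
\item The component bookkeeping is unnecessary. $d_{g_i}(x,y)=+\infty$ exactly when $x,y$ lie in different connected components of $M$, which does not depend on the metric.
\item Invoking Myers--Steenrod is superfluous. The identity map is trivially smooth, and all you actually use is the limit formula.
\end{itemize}
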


The intrinsic results show that, at fixed bandwidth, the continuous operator determines the full geometric and sampling structure. The extrinsic situation is subtler. If the ambient embedding is fixed, the operator identifies the sampling measure $p\,\mu_g$, but in general it does not identify the metric itself. However, if the metric is induced by the embedding, then one regains full identifiability.

\begin{theorem}[\textbf{Extrinsic operator identifies the sampling measure $m=p\,\mu_g$}]
\label{thm:extrinsic_identifies_measure}
Let $M$ be a compact smooth manifold without boundary of dimension $d\geq1$. Fix $t>0$ and a $C^2$ embedding $\iota:M\to\mathbb{R}^D$. Let $(g_1,p_1)$ and $(g_2,p_2)$ be such that each Riemannian metric $g_i$ is $C^2$ and each $p_i$ is a measurable probability density with respect to $\mu_{g_i}$ satisfying $p_i>0$ $\mu_{g_i}$-almost everywhere. Let $m_i:=p_i\,\mu_{g_i}$. Assume the extrinsic continuous graph Laplace operators are equal, i.e.
\[
L^{\mathrm{ext}}_{t,p_1}(g_1;\iota)=L^{\mathrm{ext}}_{t,p_2}(g_2;\iota)
\quad\text{as operators on }C(M).
\]
Then $m_1=m_2$.
\end{theorem}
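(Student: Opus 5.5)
We reduce operator equality to equality of two finite Borel measures on $M$ and then use positivity and strict monotonicity of the kernel together with injectivity of $\iota$. Write $K(x,y):=k(\|\iota(x)-\iota(y)\|^2/t)$, which is continuous and strictly positive on $M\times M$. For any finite Borel measure $m$ on $M$, the operator in \eqref{eq:extrinsic_L} reads
\[
(L f)(x)=t^{-\frac d2-1}\Bigl(f(x)\int_M K(x,y)\,dm(y)-\int_M K(x,y)f(y)\,dm(y)\Bigr).
\]

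First, we extract the multiplication part. Testing the equality $L^{\mathrm{ext}}_{t,p_1}(g_1;\iota)=L^{\mathrm{ext}}_{t,p_2}(g_2;\iota)$ on $f\equiv1$ only gives $0=0$. Instead, fix $x\in M$ and test on functions $f\in C(M)$ with $f(x)=0$. This yields
\[
\int_M K(x,y)f(y)\,dm_1(y)=\int_M K(x,y)f(y)\,dm_2(y)
\]
for all such $f$.

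Now take an arbitrary $h\in C(M)$ and a cutoff $\chi_\varepsilon\in C(M)$ with $0\le\chi_\varepsilon\le1$, $\chi_\varepsilon=1$ outside the ball $B(x,\varepsilon)$, and $\chi_\varepsilon(x)=0$. Apply the identity to $f=\chi_\varepsilon h$ and let $\varepsilon\to0$; dominated convergence gives
\[
\int_{M\setminus\{x\}}K(x,y)h(y)\,dm_1=\int_{M\setminus\{x\}}K(x,y)h(y)\,dm_2.
\]
The measures $m_i=p_i\mu_{g_i}$ are absolutely continuous with respect to smooth volume, so atoms are excluded and $m_i(\{x\})=0$. Hence $\int K(x,\cdot)h\,dm_1=\int K(x,\cdot)h\,dm_2$ for every $h\in C(M)$.

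Second, we remove the kernel. Since $K(x,\cdot)$ is continuous and strictly positive, $h/K(x,\cdot)$ ranges over all of $C(M)$ as $h$ does. Therefore $\int\phi\,dm_1=\int\phi\,dm_2$ for all $\phi\in C(M)$, and the Riesz representation theorem gives $m_1=m_2$ as Borel measures.

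This argument uses only positivity and continuity of $k$ and of the kernel. Strict monotonicity of $k$ and the embedding property of $\iota$ are not needed for this step; they enter only later, when recovering the metric.

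The main subtlety is the localization step, which rules out a point mass at $x$. That point mass would be invisible because $f(x)-f(x)=0$, and it is removed because $m_i$ has no atoms. An alternative that avoids cutoffs is to compare the operators on all $f$ directly. Writing $a_i(x)=\int K(x,\cdot)\,dm_i$, the difference $L_1-L_2$ is a multiplication operator by $a_1-a_2$ minus an integral operator with the nonatomic kernel measure $K(x,\cdot)(m_1-m_2)$. Evaluating at $x$ with $f$ peaked at $x$ and of small support forces $a_1(x)=a_2(x)$. The integral parts then agree, which again gives $m_1=m_2$.

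The hypotheses $p_i>0$ almost everywhere are not needed beyond absolute continuity.
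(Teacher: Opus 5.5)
Your proof is correct and follows essentially the same route as the paper. Both arguments reduce operator equality to equality of the integral operators $h\mapsto\int_M K(x,\cdot)h\,dm_i$ using the fact that $m_i$ has no atom at $x$, then divide out the continuous, strictly positive kernel and invoke uniqueness in the Riesz representation theorem. The only difference is technical: the paper identifies $K(x,\cdot)(m_1-m_2)=a(x)\delta_x$ via Riesz and evaluates on $\{x\}$, whereas you test on functions vanishing at $x$ and pass to the limit with cutoffs and dominated convergence.
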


\begin{theorem}[\textbf{Extrinsic operator identifies the embedding-induced Riemannian metric for fixed bandwidth}]
\label{thm:extrinsic_induced_metric_identifiable}
Let $M$ be a compact smooth manifold without boundary, $\dim M=d\geq1$. Fix $t>0$ and let $\iota_1,\iota_2:M\to \Ddim$ be $C^2$ embeddings with induced metrics $g_i:=\iota_i^*\langle\cdot,\cdot\rangle_{\Ddim}$. Let $p$ be the same measurable function for both embeddings, with $p>0$ $\mu_{g_i}$-almost everywhere and $\int_M p\,d\mu_{g_i}=1$ for $i=1,2$. Define the corresponding extrinsic continuous graph Laplace operators as in Definition~\ref{def:extrinsic_continuous_GL}. If the extrinsic continuous graph Laplace operators are equal, i.e.
\[
L^{\mathrm{ext}}_{t,p}(g_1;\iota_1)=L^{\mathrm{ext}}_{t,p}(g_2;\iota_2)
\quad\text{as operators on }C(M),
\]
then
\[
\|\iota_1(x)-\iota_1(y)\|=\|\iota_2(x)-\iota_2(y)\|
\quad\text{for all }x,y\in M,
\]
and consequently $g_1=g_2$.
\end{theorem}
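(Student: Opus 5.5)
The strategy is to recover, from the operator, the kernel measure $k(\ell_i(x,y)^2/t)\,p(y)\,d\mu_{g_i}(y)$ for each $x$, and then use the fact that the embeddings induce the volume measures to cancel the density factor. Write $\ell_i(x,y)=\|\iota_i(x)-\iota_i(y)\|$ and $m_i=p\,\mu_{g_i}$. First I test the operators on the constant function $f\equiv 1$. This gives zero on both sides, so it carries no information. Next I split $L^{\mathrm{ext}}_{t,p}(g_i;\iota_i)f(x)=t^{-d/2-1}\bigl(f(x)\,D_i(x)-T_i f(x)\bigr)$, where $D_i(x)=\int k(\ell_i(x,y)^2/t)\,dm_i(y)$ and $T_i$ is the extrinsic integral operator. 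Fix $x$ and take $f\in C(M)$ with $f(x)=0$. Equality of the operators then gives $\int k(\ell_1(x,y)^2/t) f(y)\,dm_1(y)=\int k(\ell_2(x,y)^2/t) f(y)\,dm_2(y)$ for all such $f$. The two finite Borel measures $\nu_i^x:=k(\ell_i(x,\cdot)^2/t)\,m_i$ therefore agree on $M\setminus\{x\}$. Both assign zero mass to $\{x\}$, since $m_i$ is absolutely continuous with respect to $\mu_{g_i}$ and $d\ge1$. Hence $\nu_1^x=\nu_2^x$ for every $x$.

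The key step is to compare the two kernels using the fact that $\mu_{g_1}$ and $\mu_{g_2}$ are both smooth positive volume measures on $M$. Their ratio $J:=d\mu_{g_2}/d\mu_{g_1}$ is continuous and positive, because both metrics are $C^1$ and induced by $C^2$ embeddings. Since $p>0$ almost everywhere and $k>0$, equality of $\nu_1^x$ and $\nu_2^x$ gives, for every $x$,
\[
k\!\left(\frac{\ell_1(x,y)^2}{t}\right)=k\!\left(\frac{\ell_2(x,y)^2}{t}\right)J(y)
\]
for $\mu_{g_1}$-a.e.\ $y$. Both sides are continuous in $y$, so the identity holds for all $y$. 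Setting $y=x$ gives $k(0)=k(0)J(x)$, and because $x$ is arbitrary, $J\equiv1$. This cancellation of the unknown volume ratio is the crux of the argument. It works precisely because the diagonal value $k(0)$ is the same for both kernels. In the fixed-embedding setting of Theorem~\ref{thm:extrinsic_identifies_measure} this mechanism is unavailable, which explains why only the measure is identified there.

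With $J\equiv1$ we have $k(\ell_1^2/t)=k(\ell_2^2/t)$ everywhere. Strict monotonicity makes $k$ injective, so $\ell_1=\ell_2$ on $M\times M$, which is the first conclusion. For $g_1=g_2$, note that a curve $\gamma$ through $x$ with velocity $v$ satisfies $g_i(v,v)=\lim_{s\to0}\ell_i(\gamma(0),\gamma(s))^2/s^2$, because $\iota_i$ is $C^1$ with injective differential. Thus equal chordal distances give $|d\iota_1(v)|=|d\iota_2(v)|$, and polarization yields $g_1=g_2$. The main point requiring care is the a.e.-to-everywhere passage and the role of positivity of $p$: the support of $m_i$ must be all of $M$, which holds since $p>0$ a.e. Continuity of $\ell_i$ and of $J$ then closes the argument without any differentiability of $k$.
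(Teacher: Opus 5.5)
Your proof is correct and follows the paper's argument essentially step for step. You reduce operator equality to equality of the measures $k(\ell_i(x,\cdot)^2/t)\,p\,\mu_{g_i}$ for each $x$; testing on $f$ with $f(x)=0$, and using that neither measure charges $\{x\}$, is only a cosmetic variant of the paper's Dirac-mass representation. You then write $d\mu_{g_2}=J\,d\mu_{g_1}$, pass from a.e.\ to everywhere by continuity and full support, set $y=x$ to force $J\equiv1$, invert $k$, and recover $g_i(v,v)$ as the limit of $\ell_i(x,\gamma(s))^2/s^2$, exactly as in the paper's Steps 1--3.
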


\begin{theorem}[\textbf{Extrinsic operator jointly identifies the sampling density and the embedding-induced metric}]
\label{thm:extrinsic_joint_identifies_metric_and_density}
Let $M$ be a compact smooth manifold without boundary, $\dim M=d\geq1$, and fix $t>0$. For $i=1,2$, let $\iota_i:M\to\mathbb{R}^{D}$ be $C^2$ embeddings and let $g_i:=\iota_i^*\langle\cdot,\cdot\rangle_{\mathbb{R}^{D}}$ be the induced metrics. Let $p_1,p_2$ be measurable probability densities with respect to $\mu_{g_1},\mu_{g_2}$ respectively, satisfying $p_i>0$ $\mu_{g_i}$-almost everywhere. Define the corresponding extrinsic continuous graph Laplace operators as in Definition~\ref{def:extrinsic_continuous_GL}. If the extrinsic continuous graph Laplace operators are equal, i.e.
\[
L^{\mathrm{ext}}_{t,p_1}(g_1;\iota_1)=L^{\mathrm{ext}}_{t,p_2}(g_2;\iota_2)
\quad\text{as operators on }C(M),
\]
then $g_1=g_2$ and $p_1=p_2$ $\mu_g$-almost everywhere with respect to the common induced volume measure $\mu_g$.
\end{theorem}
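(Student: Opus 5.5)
Equal operators on $C(M)$ mean equal kernels and equal measures, so the plan is to peel these off in order. First I will extract the measure and the kernel from the operator. Let $m_i:=p_i\,\mu_{g_i}$ and write $K_i(x,y):=k(\|\iota_i(x)-\iota_i(y)\|^2/t)$. Testing the equality of operators on the constant function $f\equiv 1$ gives nothing, since $L1=0$. Instead I will write
\[
L_i f(x)=t^{-d/2-1}\Bigl(f(x)\int_M K_i(x,y)\,dm_i(y)-\int_M K_i(x,y)f(y)\,dm_i(y)\Bigr).
\]
Fix $x$. For $f$ vanishing at $x$, the equality $L_1f(x)=L_2f(x)$ reads $\int K_1(x,\cdot)f\,dm_1=\int K_2(x,\cdot)f\,dm_2$. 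Every continuous $f$ vanishing at $x$ is allowed here. By Riesz representation, the finite Borel measures $K_1(x,\cdot)m_1$ and $K_2(x,\cdot)m_2$ therefore agree on $M\setminus\{x\}$.

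Both measures are absolutely continuous with respect to smooth volume, so they have no atoms. Hence they agree on all of $M$ for every $x$. In particular, for each $x$,
\[
K_1(x,y)\,p_1(y)\,\frac{d\mu_{g_1}}{d\mu_{g_2}}(y)=K_2(x,y)\,p_2(y)
\qquad \text{for }\mu_{g_2}\text{-a.e. } y.
\]
The two volume measures are mutually absolutely continuous with a continuous positive density, since both metrics are continuous. Set $h:=p_1\,\frac{d\mu_{g_1}}{d\mu_{g_2}}/p_2$, which is positive and finite a.e. Then $K_1(x,y)h(y)=K_2(x,y)$ for a.e. $y$, for every $x$.

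Next I will eliminate $h$ using the diagonal. Take a countable dense set $\{x_r\}$ and a common full-measure set $N^c$ on which the identity holds for all $x_r$. By continuity of $K_i$ in $x$, the identity then holds for all $x$ and all $y\in N^c$. Setting $x=y\in N^c$ gives $k(0)h(y)=k(0)$, so $h=1$ a.e. Consequently $K_1(x,y)=K_2(x,y)$ for all $x$ and a.e. $y$, hence for all $(x,y)$ by continuity and density of $N^c$.

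Because $k$ is strictly decreasing, it is injective. Therefore $\|\iota_1(x)-\iota_1(y)\|=\|\iota_2(x)-\iota_2(y)\|$ for all $x,y$, exactly as in Theorem~\ref{thm:extrinsic_induced_metric_identifiable}. Alternatively, $h=1$ means $m_1=m_2$, which is the argument of Theorem~\ref{thm:extrinsic_identifies_measure}, and I would simply invoke these two proofs.

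Then I will deduce equality of the metrics from equality of chordal distances. Fix $x$, a tangent vector $v$, and a $C^1$ curve $\gamma(s)$ with $\gamma(0)=x$ and $\gamma'(0)=v$. Then
\[
\|\iota_i(\gamma(s))-\iota_i(x)\|/|s|\to\|d\iota_i(v)\|=\sqrt{g_i(v,v)}.
\]
So $g_1(v,v)=g_2(v,v)$, and polarization gives $g_1=g_2=:g$. It follows that $\mu_{g_1}=\mu_{g_2}=\mu_g$. Combining this with $h=1$ gives $p_1=p_2$ $\mu_g$-a.e., which is the claimed conclusion.

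The main obstacle is the measure-theoretic step of passing from "for each $x$, a.e. $y$" to a single null set, and then evaluating on the diagonal. This needs care because $p_i$ are merely measurable. The plan is to handle it via the countable dense set together with joint continuity of $K_i$, as above. Note that no positivity of $p_i$ is needed beyond the requirement that $h$ be well-defined. If $p_2$ may vanish on a null set, I will instead work directly with the identity $K_1\,dm_1=K_2\,dm_2$ and compare it with $k(0)\,dm_i$ by letting $x\to y$ in Lebesgue differentiation.
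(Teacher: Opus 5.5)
Your proposal is correct and follows essentially the same route as the paper. You reduce operator equality to equality of the measures $K_i(x,\cdot)\,p_i\,\mu_{g_i}$ via Riesz representation and the absence of atoms, use a countable dense set of base points plus continuity in $x$ to get a single null set, evaluate on the diagonal to force the density ratio to equal $1$, and then use injectivity of $k$ and a first-order expansion along curves to obtain equal chordal distances, $g_1=g_2$, and finally $p_1=p_2$ almost everywhere. The differences are only cosmetic: you test directly with functions vanishing at $x$ rather than first showing the diagonal coefficient vanishes, and you read off $p_1=p_2$ from $h=1$ rather than from positivity of the kernel. Your closing aside about $p_2$ vanishing on a null set is unnecessary, since the almost-everywhere-defined ratio $h$ already handles that.
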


\section{Fixed-bandwidth forward-map embeddings}\label{sec:forwardmaps}
The fixed-bandwidth graph Laplace operators define forward maps from geometry and density into an operator space. On compact parameter classes these maps are topological embeddings, providing a continuous inverse onto their images. Here intrinsic metrics and extrinsic embeddings are $C^2$, induced metrics are $C^1$, and densities are continuous.

\begin{theorem}[\textbf{Topological embedding of the intrinsic forward map at fixed bandwidth}]
\label{thm:top_embedding_intrinsic_intro}
Let $M$ be a compact smooth manifold without boundary of dimension $d\geq1$, and fix $t>0$. Let $\Theta_{\mathrm{int}}$ be a compact parameter class of pairs $(g,p)$, where $g$ is a $C^2$ Riemannian metric on $M$ and $p\in C(M)$ is strictly positive with $\int_M p\,d\mu_g=1$. Endow the set of $C^2$ Riemannian metrics on $M$ with the topology induced by $C^2$ convergence, endow $C(M)$ with the uniform topology, and equip $\Theta_{\mathrm{int}}$ with the corresponding product topology. Then the intrinsic fixed-bandwidth forward map (cf. Definition \ref{def:intrinsic_forward_map})
\[
(g,p)\longmapsto L^{(g)}_{t,p}\in \cL(C(M),C(M))
\]
is a topological embedding for the operator norm and for the weak metric $d_*$ defined in Section~\ref{scn:prelims}. Denote this restricted map by $\Phi_t^{\mathrm{int}}(g,p):=L^{(g)}_{t,p}$.
\end{theorem}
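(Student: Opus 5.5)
The plan is the standard compact-to-Hausdorff argument: prove that the forward map $\Phi_t^{\mathrm{int}}$ is injective on $\Theta_{\mathrm{int}}$ and continuous into the operator norm topology. These two facts give the embedding for both topologies. A continuous injection from a compact space into a Hausdorff space (here $\cL(C(M),C(M))$ with the norm metric) is a homeomorphism onto its image, which handles the operator norm case. Proposition~\ref{prop:embedding_for_dstar} then yields the $d_*$ case.

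\textbf{Injectivity.} Injectivity follows directly from Theorem~\ref{thm:intrinsic_joint_identifiable}. If $L^{(g_1)}_{t,p_1}=L^{(g_2)}_{t,p_2}$, then $g_1=g_2$ and $p_1=p_2$ $\mu_g$-a.e. Since $p_1,p_2$ are continuous and $\mu_g$ has full support (its density is positive in charts), the two densities agree everywhere. The strict positivity and normalization assumptions in $\Theta_{\mathrm{int}}$ are exactly what that theorem requires.

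\textbf{Continuity.} This is the main work. Write $L^{(g)}_{t,p}f=t^{-d/2-1}\bigl(f\cdot T^{(g)}_{t,p}1-T^{(g)}_{t,p}f\bigr)$. The operator norm of a difference is then controlled by
\[
\|L^{(g)}_{t,p}-L^{(g')}_{t,p'}\|\le 2t^{-d/2-1}\sup_{x\in M}\int_M\Bigl|k\bigl(d_g(x,y)^2/t\bigr)p(y)\rho_g(y)-k\bigl(d_{g'}(x,y)^2/t\bigr)p'(y)\rho_{g'}(y)\Bigr|\,d\lambda(y).
\]
Here $\lambda$ is a fixed smooth reference volume and $\rho_g=d\mu_g/d\lambda$. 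The bound holds because the kernel operators $C(M)\to C(M)$ have norm equal to the sup over $x$ of the $L^1$ norm of the kernel. It therefore suffices to prove the following:
\begin{itemize}
\item[(a)] $\rho_{g_n}\to\rho_g$ uniformly whenever $g_n\to g$ in $C^0$, which holds since $\rho_g=\sqrt{\det g_{ij}}$ in charts.
\item[(b)] $p_n\to p$ uniformly.
\item[(c)] $d_{g_n}\to d_g$ uniformly on $M\times M$ whenever $g_n\to g$ uniformly.
\end{itemize}
For (c), uniform closeness of metrics gives $(1-\varepsilon_n)g\le g_n\le(1+\varepsilon_n)g$ as quadratic forms. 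Comparing lengths of curves then gives $\sqrt{1-\varepsilon_n}\,d_g\le d_{g_n}\le\sqrt{1+\varepsilon_n}\,d_g$. Within a connected component the intrinsic diameter is finite, so the convergence is uniform there. Across components both distances are $+\infty$, so the kernel is $0$ for both metrics and those pairs contribute nothing.

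Given (c), uniform continuity of $k$ on the compact range $[0,\operatorname{diam}^2/t+1]$ gives uniform convergence of the kernels. Combining with (a) and (b), the integrand converges uniformly in $(x,y)$, which proves continuity. A sequential argument suffices throughout, since $\Theta_{\mathrm{int}}$ is metrizable.

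\textbf{Conclusion and obstacles.} Having shown a continuous injection from compact $\Theta_{\mathrm{int}}$ (which is Hausdorff, being metrizable), the operator norm statement follows from the closed-map argument. Apply Proposition~\ref{prop:embedding_for_dstar} for $d_*$. The main obstacle I expect is step (c), specifically the uniformity of the metric comparison and the handling of the extended distance between components. A secondary point is justifying the identity between the $C(M)$-operator norm and the sup of the kernel $L^1$ norms; only the upper bound is needed, and it is immediate.
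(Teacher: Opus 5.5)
Your proposal is correct and follows essentially the same route as the paper. Continuity into the operator norm comes from uniform convergence of the volume densities, the densities, and the kernels: you use the comparison $(1-\varepsilon)g\le g_\ell\le(1+\varepsilon)g$, finite diameter of each component, and the zero-kernel convention across components. Injectivity comes from Theorem~\ref{thm:intrinsic_joint_identifiable} together with continuity of $p$ and full support of $\mu_g$, and the conclusion uses the compact-to-Hausdorff criterion plus Proposition~\ref{prop:embedding_for_dstar} for $d_*$. The only cosmetic difference is that you bound by $\sup_x$ of an $L^1(\lambda)$-norm of the integrand difference, while the paper uses the cruder $\mu_h(M)\,\|K_\ell p_\ell\rho_\ell-Kp\rho\|_{L^\infty(M\times M)}$; both suffice because the integrand converges uniformly.
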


\begin{definition}[\textbf{Rigid-motion equivalence of embeddings}]
\label{def:rigid_motion_equiv}
Let $D\in \bbn$, and let $\operatorname{Emb}^{2}(M,\RR^{D})$ denote the space of $C^{2}$ embeddings $\iota:M\to\RR^{D}$, endowed with the $C^{2}$-topology.

Define an equivalence relation $\sim$ on $\operatorname{Emb}^{2}(M,\RR^{D})$ by declaring $\iota_1\sim \iota_2$ if there exist $A\in O(D)$ and $b\in \RR^{D}$ such that $\iota_2 = A\circ \iota_1 + b$. Denote by $[\iota]$ the equivalence class of $\iota$, and let
\[
\cE^{2}(M,\RR^{D}):=\operatorname{Emb}^{2}(M,\RR^{D})/\sim
\]
be the quotient space, endowed with the quotient topology.

This quotient is metrizable. Indeed, choose a $C^2$ norm on maps $M\to\RR^D$ using a reference metric on $M$ and Euclidean norms on values and derivatives. The rigid-motion group acts by isometries for the associated distance between maps. The formula
\[
d_{\cE}([\iota],[\kappa]):=\inf_{A\in O(D),\,b\in\RR^D}\norm{\kappa-(A\iota+b)}_{C^2}
\]
defines a metric: if the infimum is zero, take a sequence $(A_j,b_j)$ realizing the infimum asymptotically; compactness of $O(D)$ and boundedness of $b_j$ (by evaluation at one point of $M$) give a subsequence with $\kappa=A\iota+b$. The triangle inequality follows from the isometric group action. Balls for $d_{\cE}$ are images under the quotient map of $C^2$ balls, so this metric induces precisely the quotient topology. In particular, compact parameter subclasses of $\cE^2(M,\RR^D)\times C(M)$ are compact metrizable spaces.

For any fixed $t>0$ and $[\iota]\in \cE^{2}(M,\RR^{D})$, the kernel
\[
K_{[\iota]}(x,y):=k\!\left(\frac{\norm{\iota(x)-\iota(y)}^{2}}{t}\right)
\]
is well-defined, because rigid motions preserve Euclidean distances. Likewise, the induced metric
\[
g_{[\iota]}:=\iota^{*}\la\cdot,\cdot\ra_{\RR^{D}}
\]
is well-defined.
\end{definition}

\begin{theorem}[\textbf{Topological embedding of the extrinsic forward map at fixed bandwidth}]
\label{thm:top_embedding_extrinsic_intro}
Let $M$ be a compact smooth manifold without boundary of dimension $d\geq1$, and fix $t>0$ and $D\in\bbn$. Let $\Theta_{\mathrm{ext}}$ be a compact subset of $\cE^2(M,\RR^D)\times C(M)$, with the product of the quotient $C^2$ and uniform topologies from Definition~\ref{def:rigid_motion_equiv}. Assume that each $([\iota],p)\in\Theta_{\mathrm{ext}}$ has $p$ strictly positive and $\int_M p\,d\mu_{g_{[\iota]}}=1$. Then the extrinsic fixed-bandwidth forward map (cf. Definition~\ref{def:extrinsic_forward_map})
\[
([\iota],p)\longmapsto L^{\mathrm{ext}}_{t,p}(g_{[\iota]};[\iota])\in \cL(C(M),C(M))
\]
is a topological embedding for the operator norm and for the weak metric $d_*$ defined in Section~\ref{scn:prelims}. Denote this restricted map by $\Phi_t^{\mathrm{ext}}$.
\end{theorem}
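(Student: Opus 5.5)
The proof will use Proposition~\ref{prop:embedding_for_dstar}. It therefore suffices to show that the map $\Phi_t^{\mathrm{ext}}:\Theta_{\mathrm{ext}}\to\cL(C(M),C(M))$ is well defined, injective, and continuous for the operator norm on the compact metrizable space $\Theta_{\mathrm{ext}}$. A continuous injection from a compact space into a Hausdorff space (the operator norm topology) is a homeomorphism onto its image, which gives the operator-norm statement. Proposition~\ref{prop:embedding_for_dstar} then transfers the conclusion to $d_*$. Well-definedness is already noted in Definition~\ref{def:rigid_motion_equiv}: the kernel $K_{[\iota]}$ and the induced metric $g_{[\iota]}$, and hence $\mu_{g_{[\iota]}}$, depend only on the rigid-motion class.

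For injectivity, suppose $\Phi_t^{\mathrm{ext}}([\iota_1],p_1)=\Phi_t^{\mathrm{ext}}([\iota_2],p_2)$. Theorem~\ref{thm:extrinsic_joint_identifies_metric_and_density} gives $g_1=g_2$ and $p_1=p_2$ almost everywhere; since both densities are continuous and $\mu_g$ has full support, $p_1=p_2$ everywhere. To get $[\iota_1]=[\iota_2]$ I will rerun the first step of that argument, as in Theorem~\ref{thm:extrinsic_induced_metric_identifiable}, now with a common density. It yields $\|\iota_1(x)-\iota_1(y)\|=\|\iota_2(x)-\iota_2(y)\|$ for all $x,y$. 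The map $\iota_1(x)\mapsto\iota_2(x)$ is then a distance-preserving map between subsets of $\RR^D$. By the classical extension theorem, any isometry between subsets of Euclidean space extends to a rigid motion $z\mapsto Az+b$ of $\RR^D$. Hence $\iota_2=A\iota_1+b$, i.e.\ $[\iota_1]=[\iota_2]$.

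For continuity, I will write $L f(x)=t^{-d/2-1}\bigl(f(x)\,T1(x)-Tf(x)\bigr)$ with $T=T^{\mathrm{ext}}_{t,p}(g_{[\iota]};\iota)$. It is then enough to bound $\|T_1-T_2\|_{C\to C}$ by
\[
\sup_{x}\int_M\bigl|K_1(x,y)p_1(y)\,d\mu_{g_1}(y)-K_2(x,y)p_2(y)\,d\mu_{g_2}(y)\bigr|.
\]
Suppose $([\iota_j],p_j)\to([\iota],p)$. Choosing representatives with $\|\iota_j-\iota\|_{C^2}\to0$, which is possible by the definition of $d_{\cE}$, I will use the following three facts. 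First, $K_j\to K$ uniformly on $M\times M$, because $k$ is uniformly continuous on the compact range $[0,\sup\|\iota_j(x)-\iota_j(y)\|^2/t]$ and the squared distances converge uniformly. Second, $p_j\to p$ uniformly. Third, the induced metrics $g_{[\iota_j]}$ converge in $C^1$, in particular in $C^0$, so in a fixed reference smooth volume $\mu_g=\rho_g\,\mu_{\mathrm{ref}}$ the densities $\rho_{g_j}\to\rho_g$ converge uniformly. The integrand difference is therefore bounded by a uniformly small function times $\mu_{\mathrm{ref}}(M)$, which gives operator-norm convergence.

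The main obstacle is the injectivity step. Theorem~\ref{thm:extrinsic_joint_identifies_metric_and_density} as stated concludes only $g_1=g_2$, and equal induced metrics do not force congruent embeddings. So I must check that its proof, or that of Theorem~\ref{thm:extrinsic_induced_metric_identifiable} applied once $p_1=p_2$ and $\mu_{g_1}=\mu_{g_2}$ are known, genuinely yields equality of all Euclidean chord lengths. I would obtain it directly from the kernels. The operators share the measure $m=p\,\mu_g$ (Theorem~\ref{thm:extrinsic_identifies_measure} applies to each fixed embedding, but here the embeddings differ). I then plan to test $L$ on $f$ concentrated near a point $y_0$ with $f(x)=0$ and let $f$ shrink, so that $K_1(x,y_0)=K_2(x,y_0)$ for $m$-almost every $y_0$; this uses $p>0$ and the continuity of the kernels, and the identity then extends to all $y_0$ by continuity. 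Strict monotonicity of $k$ finally gives equality of the chord lengths.
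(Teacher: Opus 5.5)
Your proposal is correct and matches the paper's proof in all essentials: operator-norm continuity from uniform convergence of the chord kernels, the densities and the volume densities; injectivity from the extrinsic identifiability results, which give equal chord lengths, followed by extending the resulting isometry to a rigid motion; and then the compact-to-Hausdorff criterion together with Proposition~\ref{prop:embedding_for_dstar}. The differences are minor. The paper proves continuity by lifting to $\operatorname{Emb}^2(M,\RR^D)\times C(M)$ and using that the quotient map is open, where you choose $C^2$-convergent representatives via $d_{\cE}$. The paper also takes chord equality from the proof of Theorem~\ref{thm:extrinsic_joint_identifies_metric_and_density} and proves the isometry extension by polarization, whereas you cite the extension and obtain chord equality from the statement of Theorem~\ref{thm:extrinsic_induced_metric_identifiable} once $p_1=p_2$ is known; this is equally valid and relies on a stated result rather than a proof.
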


\begin{remark}
The compactness assumptions hold, for example, for a finite grid of admissible parameters, or for the continuous image in the admissible class of a compact finite-dimensional parameter set. Passing from a precompact family to its closure requires that the closure remain within the admissible class of positive densities, nondegenerate metrics, and (in the extrinsic case) embeddings.
\end{remark}

\section{Empirical convergence and consistent recovery}
\label{sec:consistency}

The following lemma verifies convergence of the actual empirical operators in the weak metric $d_*$ defined in Section~\ref{scn:prelims}.

\begin{lemma}[\textbf{Fixed-bandwidth empirical graph Laplacian converges to the population graph Laplacian in weak metric}]
\label{lem:empirical_dstar_convergence}
Let $M$ be a compact smooth manifold without boundary of dimension $d\ge1$, fix $t>0$, let $g_0$ be a continuous Riemannian metric on $M$, and let $p_0$ be a measurable probability density with respect to $\mu_{g_0}$. Let $X_1,X_2,\ldots$ be i.i.d.\ with distribution $p_0\,\mu_{g_0}$. Take either $\ell=d_{g_0}$ or $\ell(x,y)=\|\iota_0(x)-\iota_0(y)\|$, where $g_0$ is induced by the $C^2$ embedding $\iota_0$ in the latter case. Let $L_{n,t}^{\ell}$ and $L_t^{\ell}$ be the operators of Definitions~\ref{def:discrete_GL} and \ref{def:continuous_GL}. Then $L_{n,t}^{\ell}$ is measurable for $d_*$ and
\[
d_*(L_{n,t}^{\ell},L_t^{\ell})\longrightarrow0, \, n \to \infty\qquad\text{almost surely}.
\]
\end{lemma}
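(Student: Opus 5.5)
The plan is to reduce the $d_*$ convergence to countably many pointwise strong laws and then remove the null sets by a countable union. Write $K(x,y):=k(\ell(x,y)^2/t)$. By assumption $K$ is continuous on $M\times M$, and it is bounded by $k(0)$. For each fixed $f\in C(M)$ and $x\in M$,
\[
(L_{n,t}^{\ell}f)(x)=\frac{1}{n\,t^{d/2+1}}\sum_{j=1}^n K(x,X_j)\bigl(f(x)-f(X_j)\bigr).
\]
This is an average of i.i.d.\ bounded random variables. Its mean is $(L_t^{\ell}f)(x)$, computed with respect to $p_0\,\mu_{g_0}$. The strong law of large numbers therefore gives an event $\Omega_{m,q}$ of probability one on which $(L_{n,t}^{\ell}f_m)(z_q)\to(L_t^{\ell}f_m)(z_q)$. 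Set $\Omega_0:=\bigcap_{m,q}\Omega_{m,q}$, which still has probability one.

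On $\Omega_0$, every summand $2^{-(m+q)}\min\{1,|L_{n,t}^{\ell}f_m(z_q)-L_t^{\ell}f_m(z_q)|\}$ tends to zero. Each summand is dominated by $2^{-(m+q)}$, which is summable. Dominated convergence for the double series (or a direct tail cut: pick $N$ with tail $<\varepsilon$, then wait until the finitely many head terms are $<\varepsilon$) then gives $d_*(L_{n,t}^{\ell},L_t^{\ell})\to0$ on $\Omega_0$.

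Before this, I must check that $L_{n,t}^{\ell}$ really is an element of $\cL(C(M),C(M))$. Continuity of $x\mapsto K(x,X_j)$ and of $f$ shows that $L_{n,t}^{\ell}f\in C(M)$. The operator is linear, with norm at most $2k(0)t^{-d/2-1}$. The same bound and continuity hold for $L_t^{\ell}$, by uniform continuity of $K$ on the compact set $M\times M$ together with dominated convergence. These facts hold in both cases, $\ell=d_{g_0}$ (including the $k(+\infty)=0$ convention across components) and the extrinsic $\ell$, because the standing assumption makes $K$ continuous.

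For measurability, I understand ``measurable for $d_*$'' to mean that $\omega\mapsto d_*(L_{n,t}^{\ell},T)$ is measurable for each fixed $T$, or equivalently that $L_{n,t}^{\ell}$ is measurable into the metric space $(\cL(C(M),C(M)),d_*)$ with its Borel $\sigma$-algebra. Each coordinate $\omega\mapsto (L_{n,t}^{\ell}f_m)(z_q)$ is a continuous function of $(X_1,\dots,X_n)$, hence measurable. Thus $d_*(L_{n,t}^{\ell},T)$ is a countable series of measurable functions and is measurable.

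To obtain Borel measurability of the map itself, I use that $d_*$ depends only on the coordinates. Under the map $\pi:S\mapsto (Sf_m(z_q))_{m,q}$ into $\RR^{\bbn\times\bbn}$, the metric $d_*$ is the pullback of a metric inducing the product topology. Proposition~\ref{prop:dstar_metric} makes $\pi$ injective, so $\pi$ is an isometric embedding. Open $d_*$-balls are then preimages of product-measurable sets, and every Borel set of $d_*$ is a preimage of a product-Borel set. The remaining concern is that the space need not be separable, so open sets need not be countable unions of balls. I will handle this by working with the trace $\sigma$-algebra generated by $\pi$: the Borel sets of $d_*$ are exactly the $\pi$-preimages of Borel sets in the product space, since $\pi$ is a homeomorphism onto its image and that image is separable metric. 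Measurability of each coordinate of $\pi\circ L_{n,t}^{\ell}$ then suffices.

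This measurability bookkeeping is the only delicate step; the convergence itself is a routine strong law plus a countable intersection.
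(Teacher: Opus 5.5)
Your proposal is correct and follows essentially the same route as the paper: for each pair $(m,q)$ you apply the strong law with summands bounded by $2k(0)t^{-d/2-1}$, intersect the countably many probability-one events, conclude by dominated convergence for the weighted series, and get measurability from the coordinates $(L_{n,t}^{\ell}f_m)(z_q)$. Your argument via the isometric embedding $\pi$ into $\RR^{\bbn\times\bbn}$ simply spells out the paper's one-line remark that these coordinates generate the $d_*$ topology; it also shows that $(\cL(C(M),C(M)),d_*)$ is separable, so the separability concern you raise never actually arises.
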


The remaining step is the standard approximate $M$-estimation argument, stated as a subordinate proposition. Combining it with the empirical convergence lemma and the compact-class embeddings proves Theorems~\ref{thm:intrinsic_recovery_intro} and \ref{thm:extrinsic_recovery_intro}; all proofs are in Appendix~\ref{app:empirical}.

\begin{proposition}[\textbf{Approximate minimum-distance consistency}]
\label{prop:consistency_MDE}
Let $(\Theta,d_{\Theta})$ be a compact metric space, and let $\Phi:\Theta\to\cL(C(M),C(M))$ be a topological embedding for $d_*$. Fix $\theta_0\in\Theta$ and set $T_0:=\Phi(\theta_0)$. Suppose that $\widehat T_n$ is a $d_*$-measurable operator-valued random element with $d_*(\widehat T_n,T_0)\to0$ in probability. Let $r_n\ge0$ be random with $r_n\to0$ in probability, and let $\widehat\theta_n$ be a measurable $\Theta$-valued estimator satisfying, almost surely for each $n$,
\[
d_*(\widehat T_n,\Phi(\widehat\theta_n))\le
\inf_{\theta\in\Theta}d_*(\widehat T_n,\Phi(\theta))+r_n.
\]
Then $\widehat\theta_n\to\theta_0$ in probability. If both $d_*(\widehat T_n,T_0)\to0$ and $r_n\to0$ almost surely, then $\widehat\theta_n\to\theta_0$ almost surely.
\end{proposition}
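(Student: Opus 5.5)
The plan is the standard argmin-consistency argument, with separation supplied by the embedding property of $\Phi$. The first step is a deterministic separation lemma. For $\varepsilon>0$ put
\[
\eta(\varepsilon):=\inf\{d_*(\Phi(\theta),T_0):\theta\in\Theta,\ d_\Theta(\theta,\theta_0)\ge\varepsilon\}.
\]
The set $\{\theta: d_\Theta(\theta,\theta_0)\ge\varepsilon\}$ is closed in the compact space $\Theta$, hence compact. The map $\theta\mapsto d_*(\Phi(\theta),T_0)$ is continuous there, since $\Phi$ is continuous into $(\cL(C(M),C(M)),d_*)$, so the infimum is attained whenever the set is nonempty. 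It is positive, because $\Phi$ is injective and $d_*$ is a metric (Proposition~\ref{prop:dstar_metric}). If the set is empty, set $\eta(\varepsilon)=\infty$. In either case $\eta(\varepsilon)>0$.

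Next, the defining inequality of $\widehat\theta_n$ gives a pathwise bound. On the almost-sure event where that inequality holds, take $\theta=\theta_0$ in the infimum and use the triangle inequality:
\[
d_*(\Phi(\widehat\theta_n),T_0)\le d_*(\Phi(\widehat\theta_n),\widehat T_n)+d_*(\widehat T_n,T_0)\le 2\,d_*(\widehat T_n,T_0)+r_n.
\]
Call the right-hand side $Z_n$. By the definition of $\eta$, the event $\{d_\Theta(\widehat\theta_n,\theta_0)\ge\varepsilon\}$ is contained, up to a null set, in $\{Z_n\ge\eta(\varepsilon)\}$.

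For convergence in probability, note that $Z_n\to0$ in probability, being a sum of terms that do so. Hence
\[
\mathbb P\bigl(d_\Theta(\widehat\theta_n,\theta_0)\ge\varepsilon\bigr)\le\mathbb P\bigl(Z_n\ge\eta(\varepsilon)\bigr)\to0 .
\]
For almost-sure convergence, fix $\omega$ in the full-measure event where $Z_n\to0$ and all the defining inequalities hold; this is a countable intersection of full-measure events. For each $\varepsilon=1/k$, eventually $Z_n<\eta(1/k)$, so eventually $d_\Theta(\widehat\theta_n,\theta_0)<1/k$. Thus $\widehat\theta_n\to\theta_0$ almost surely.

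The main obstacle is measurability. The random variables $d_*(\widehat T_n,T_0)$ and $d_\Theta(\widehat\theta_n,\theta_0)$ are measurable by hypothesis, because $\widehat T_n$ is $d_*$-measurable and $\widehat\theta_n$ is measurable. The infimum $\inf_\theta d_*(\widehat T_n,\Phi(\theta))$ is never used as a random variable: the argument only needs the pathwise inequality against the single term $\theta=\theta_0$. This sidesteps any measurability question about the infimum. Apart from this point, the proof is a direct consequence of compactness and injectivity.
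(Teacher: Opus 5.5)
Your proof is correct and uses the same ingredients as the paper. These are the separation of $\theta_0$ from $\{d_\Theta(\theta,\theta_0)\ge\varepsilon\}$ obtained from compactness and injectivity, and the pathwise bound $d_*(\Phi(\widehat\theta_n),T_0)\le 2d_*(\widehat T_n,T_0)+r_n$. The paper gets the in-probability statement by citing van der Vaart's Theorem~5.7, using $\sup_\theta|M_n(\theta)-M(\theta)|\le d_*(\widehat T_n,T_0)$, and the almost-sure statement from continuity of $\Phi^{-1}$ on $\Phi(\Theta)$. You instead derive both directly from the separation constant $\eta(\varepsilon)$, which makes your argument self-contained.
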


\begin{remark}
The finite-sample criterion may have more than one minimizer, and $L_{n,t}^{\ell}$ need not belong to the image of the population forward map. The approximation error required for an asymptotic minimum-distance estimator is already part of Definition~\ref{def:minimum_distance_estimator}; it is zero for an exact estimator. Both recovery theorems use the out-of-sample operator on $C(M)$ of Definition~\ref{def:discrete_GL}, which requires distances $\ell(x,X_j)$ for arbitrary query points $x\in M$.
\end{remark}

\section*{Acknowledgements and funding}
This research was supported by the Research Foundation--Flanders (FWO), Odysseus II programme G0DBZ23N. The author thanks David Tewodrose and Fernando Galaz-Garcia for discussions and suggestions that improved the manuscript.
%%%%%%%%%%%%%%%%%%%%%%%%%%%%%%%%%%%%%%%%%%%%%%%%%%%%%%%%%%%%%%%%%%%%%%%%%%%%%%%%%%%%%%%%%%%%%%%%%%%%%%%%%%%%%%%%%%%%%%%%%%%%%%%%%%%%%%%%%%%%%%%%%%%%%%%%%%%%%%%%%%%%%%%%%%%%%%%%%%%%%%%%%%%%%%%%%%%%%%%%

\appendix
\section{Proofs of preliminary propositions}\label{app:prelims}
\begin{proof}[Proof of Proposition~\ref{prop:dstar_metric}]
Symmetry and the triangle inequality are immediate. Assume $d_{*}(S,T)=0$. Then $Sf_m(z_q)=Tf_m(z_q)$ for every $m,q$. Since $Sf_m$ and $Tf_m$ are continuous on $M$, and $\{z_q\}_{q\ge 1}$ is dense in $M$, it follows that $Sf_m=Tf_m$ for every $m$. Now let $f\in C(M)$ with $\norm{f}_{\infty}\le 1$. Since $\{f_m\}_{m\ge 1}$ is dense in the unit ball, there exists a sequence $(f_{m_j})_{j\ge 1}$ such that $\norm{f_{m_j}-f}_{\infty}\to 0$. As $S$ and $T$ are bounded linear operators,
\[
\norm{Sf_{m_j}-Sf}_{\infty}\le \norm{S}_{C(M)\to C(M)}\,\norm{f_{m_j}-f}_{\infty}\to 0,
\]
and similarly $\norm{Tf_{m_j}-Tf}_{\infty}\to 0$. Since $Sf_{m_j}=Tf_{m_j}$ for all $j$, we obtain $Sf=Tf$. By homogeneity, this holds for every $f\in C(M)$, hence $S=T$.
\end{proof}

\begin{proof}[Proof of Proposition~\ref{prop:embedding_for_dstar}]
Since $\|f_m\|_\infty\le1$ and the weights sum to one, $d_*(S,T)\le\min\{1,\|S-T\|_{C(M)\to C(M)}\}$. Thus $\Phi$ is continuous as a map into the metric space $(\cL(C(M),C(M)),d_{*})$, which is Hausdorff. The conclusion follows from the compact-to-Hausdorff criterion.
\end{proof}

\section{Proofs of population identifiability}
\label{sec:proofs}

\subsection{Intrinsic continuous graph Laplacians}

\begin{lemma}[\textbf{A $C^2$ Riemannian metric is determined by its distance}]
\label{lem:metric_determined_by_distance}
Let $M$ be a smooth manifold and let $g_1,g_2$ be $C^2$ Riemannian metrics on $M$ with geodesic distance functions $d_{g_1},d_{g_2}$. If
\[
d_{g_1}(x,y)=d_{g_2}(x,y)\qquad\text{for all }x,y\in M,
\]
then $g_1=g_2$.
\end{lemma}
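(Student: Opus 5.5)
The plan is to recover the metric tensor at each point from the infinitesimal behaviour of the distance function, through the identity
\[
g_x(v,v)^{1/2}=\lim_{s\to0^+}\frac{d_g(x,\gamma(s))}{s},
\]
valid for any $C^1$ curve $\gamma$ with $\gamma(0)=x$ and $\gamma'(0)=v$. Once this identity is established for both $g_1$ and $g_2$, the hypothesis $d_{g_1}=d_{g_2}$ gives $g_1(v,v)=g_2(v,v)$ for every tangent vector $v$. Polarization, using $g(v,w)=\tfrac14\bigl(g(v+w,v+w)-g(v-w,v-w)\bigr)$, then gives $g_1=g_2$ as symmetric bilinear forms at every point.

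To prove the identity, fix $x$ and a coordinate chart around it. By continuity of $g$, for every $\varepsilon>0$ there is a neighbourhood $U$ of $x$ on which
\[
(1-\varepsilon)\,g_x(w,w)\le g_y(w,w)\le(1+\varepsilon)\,g_x(w,w)
\]
for all $y\in U$ and all $w$, where $g_x$ is read as a constant quadratic form in the chart. For the upper bound on $d_g(x,\gamma(s))$, I will take the curve $\gamma|_{[0,s]}$ itself as a competitor. Its $g$-length is at most $(1+\varepsilon)^{1/2}\int_0^s |\gamma'(\tau)|_{g_x}\,d\tau$, and this equals $(1+\varepsilon)^{1/2}\,s\,|v|_{g_x}+o(s)$.

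For the lower bound, let $B$ be a small coordinate ball around $x$ contained in $U$. Any curve from $x$ to $\gamma(s)$ either stays inside $B$ or leaves it. If it stays in $B$, its $g$-length is at least $(1-\varepsilon)^{1/2}$ times its constant-coefficient length. That constant-coefficient length is in turn at least the flat distance $|\gamma(s)-x|_{g_x}=s|v|_{g_x}+o(s)$, because straight lines minimise length for a constant inner product on a convex chart. If the curve leaves $B$, its length is bounded below by a positive constant independent of $s$, so for small $s$ it exceeds the bound from the first case. Letting $s\to0$ and then $\varepsilon\to0$ yields the identity.

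The main point needing care is this localization in the lower bound: nearly minimizing curves must be shown to stay in the chart, and the escape argument must be made uniform in $s$. Continuity of $g$ alone suffices for all of this, so the $C^2$ hypothesis is not actually needed; no geodesic or exponential-map theory is required. If $M$ is disconnected, the distance is $+\infty$ between components, but the argument is purely local, so this does not affect it.
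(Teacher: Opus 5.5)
Your proposal is correct and follows essentially the same route as the paper's proof: freeze $g$ at $x$ to get $(1-\varepsilon)g_x\le g\le(1+\varepsilon)g_x$ near $x$, bound $d_g$ above by a short competitor curve and below by splitting curves into those that stay in the small neighbourhood and those that leave it, pass to the first-order limit, and polarize. The only cosmetic difference is that you test along an arbitrary $C^1$ curve with $\gamma'(0)=v$ where the paper uses the straight coordinate segment from $x$ to $x+sv$; like yours, the paper's argument uses only continuity of $g$.
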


\begin{proof}
Fix $x\in M$, choose a coordinate neighborhood, and let $y_s$ have coordinate difference $sv$ from $x$, for $v\in T_xM$. For each $i$ and every $\varepsilon>0$, continuity of $g_i$ gives a smaller coordinate neighborhood in which, under coordinate identification of tangent spaces, $(1-\varepsilon)g_i(x)\leq g_i(z)\leq(1+\varepsilon)g_i(x)$ as quadratic forms. The straight coordinate segment from $x$ to $y_s$ has length at most $\sqrt{1+\varepsilon}\,|s|\sqrt{g_i(x)(v,v)}$. For sufficiently small $s$, any curve leaving this smaller neighborhood has length bounded away from zero, while the segment length tends to zero. For curves remaining inside, comparison with the constant quadratic form $g_i(x)$ gives the lower bound $\sqrt{1-\varepsilon}\,|s|\sqrt{g_i(x)(v,v)}$. Taking infima over curves, then $s\to0$ and $\varepsilon\to0$, yields
\[
\lim_{s\to0}\frac{d_{g_i}(x,y_s)^2}{s^2}=g_i(x)(v,v).
\]
Equality of the distances gives equality of these quadratic forms for every $x,v$; polarization gives $g_1=g_2$.
\end{proof}

\begin{proof}[Proof of Theorem~\ref{thm:intrinsic_metric_identifiable}]
Write $\mu_i:=\mu_{g_i}$ and $K_i(x,y):=k(d_{g_i}(x,y)^2/t)$. For $i=1,2$ define the integral operators
\[
(T_if)(x):=\int_M K_i(x,y)\, f(y)\,p(y)\,d\mu_i(y), \qquad f\in C(M).
\]
Then
\[
(L^{(g_i)}_{t,p} f)(x)= \frac{1}{t^{d/2+1}}\Big((T_i1)(x) f(x) - (T_if)(x)\Big).
\]
Assuming $L^{(g_1)}_{t,p}=L^{(g_2)}_{t,p}$, we obtain for all $f\in C(M)$ and all $x\in M$,
\begin{equation}\label{eq:T_identity_metric}
(T_1f)(x) - (T_2f)(x) = \big((T_11)(x) - (T_21)(x)\big) f(x).
\end{equation}
Define $a(x):=(T_11)(x) - (T_21)(x)$.

\paragraph{Step 1: $T_11=T_21$ and hence $T_1=T_2$.}
Fix $x\in M$ and define the Borel measures
\[
m_{i,x}(B):=\int_B K_i(x,y)p(y)\,d\mu_i(y), \qquad B\subset M \text{ Borel}.
\]
Then $(T_if)(x)=\int_M f(y)\,d m_{i,x}(y)$ for all $f\in C(M)$, so \eqref{eq:T_identity_metric} becomes
\[
\int_M f(y)\,d(m_{1,x}-m_{2,x})(y)=a(x)f(x) \qquad (\forall f\in C(M)).
\]
By the Riesz representation theorem, the functional $f\mapsto a(x)f(x)$ is represented by the measure $a(x)\delta_x$, where $\delta_x$ denotes the Dirac mass at $x$, hence
\begin{equation}\label{eq:measure_identity_metric}
m_{1,x}-m_{2,x}=a(x)\delta_x\qquad\text{as finite signed measures.}
\end{equation}
Evaluating \eqref{eq:measure_identity_metric} on $\{x\}$ yields
\[
(m_{1,x}-m_{2,x})(\{x\})=a(x).
\]
On the other hand,
\[
m_{i,x}(\{x\}) = \int_{\{x\}} K_i(x,y)p(y)\,d\mu_i(y)=0,
\]
since $\mu_i(\{x\})=0$ for the Riemannian volume measure $\mu_i$. Thus $(m_{1,x}-m_{2,x})(\{x\})=0$, hence $a(x)=0$. Since $x$ was arbitrary, $a\equiv0$, so $T_11=T_21$ and \eqref{eq:T_identity_metric} implies $T_1=T_2$.

\paragraph{Step 2: identification of the kernel and the distance.}
Fix $x\in M$. From $T_1=T_2$ we have, for all $f\in C(M)$,
\[
\int_M f(y)K_1(x,y)p(y)\,d\mu_1(y)=\int_M f(y)K_2(x,y)p(y)\,d\mu_2(y).
\]
Since $\mu_1$ and $\mu_2$ are Riemannian volume measures on $M$, they are mutually absolutely continuous; write $d\mu_2=w\,d\mu_1$ for a continuous function $w>0$ using the local chart expression $d\mu_i=\sqrt{\det g_i}$. Then
\[
\int_M f(y)\big(K_1(x,y)-K_2(x,y)w(y)\big)p(y)\,d\mu_1(y)=0\qquad(\forall f\in C(M)).
\]
Hence
\[
\big(K_1(x,y)-K_2(x,y)w(y)\big)p(y)=0\qquad\text{for }\mu_1\text{-a.e. }y.
\]
Using $p>0$, this gives
\[
K_1(x,y)-K_2(x,y)w(y)=0\qquad\text{for }\mu_1\text{-a.e. }y.
\]
The map $y\mapsto K_1(x,y)-K_2(x,y)w(y)$ is continuous since $K_i$ and $w$ are so, and $\mu_1$ has full support, so the equality holds for all $y\in M$. Setting $y=x$ gives $k(0)=k(0)w(x)$, hence $w\equiv1$ and $\mu_1=\mu_2$. Therefore $K_1(x,y)=K_2(x,y)$ for all $x,y$. Strict decrease of $k$ gives $d_{g_1}=d_{g_2}$ on each connected component; both distances are infinite between components. By Lemma~\ref{lem:metric_determined_by_distance}, $g_1=g_2$.
\end{proof}

\begin{proof}[Proof of Theorem~\ref{thm:intrinsic_density_identifiable}]
Let $\mu:=\mu_g$ and $K(x,y):=k(d_g(x,y)^2/t)$. As in Step~1 of the proof of Theorem~\ref{thm:intrinsic_metric_identifiable}, operator equality implies equality of the integral operators
\[
(T_{p_j}f)(x):=\int_M K(x,y)f(y)p_j(y)\,d\mu(y)\qquad (j=1,2).
\]
Fix $x$. Then $T_{p_1}=T_{p_2}$ implies
\[
K(x,y)p_1(y)=K(x,y)p_2(y)\qquad\text{for }\mu\text{-a.e. }y.
\]
The compact manifold has finitely many connected components. On each component $C$, choose $x\in C$. Then $K(x,y)>0$ for $y\in C$, so $p_1=p_2$ $\mu$-almost everywhere on $C$, and hence on $M$.
\end{proof}

\begin{proof}[Proof of Theorem~\ref{thm:intrinsic_joint_identifiable}]
Let $\mu_i:=\mu_{g_i}$ and $K_i(x,y):=k(d_{g_i}(x,y)^2/t)$. As in Step~1 of the proof of Theorem~\ref{thm:intrinsic_metric_identifiable}, operator equality implies equality of the integral operators
\[
(T_if)(x):=\int_M K_i(x,y)f(y)p_i(y)\,d\mu_i(y), \qquad f\in C(M),\ i=1,2.
\]
Write $d\mu_2=w\,d\mu_1$ with $w\in C(M)$ and $w>0$. Then for each fixed $x\in M$,
\[
K_1(x,y)p_1(y)=K_2(x,y)p_2(y)w(y)\qquad\text{for }\mu_1\text{-a.e. }y.
\]
Set $r(y):=p_2(y)w(y)/p_1(y)$ for $\mu_1$-a.e. $y$; this is well defined since $p_1>0$ $\mu_1$-almost everywhere. Then for each fixed $x\in M$,
\[
K_1(x,y)=K_2(x,y)r(y)\qquad\text{for }\mu_1\text{-a.e. }y.
\]
Choose a countable dense subset $\{x_n\}_{n\ge1}\subset M$. For each $n$, let $N_n$ be a $\mu_1$-null set outside of which $K_1(x_n,y)=K_2(x_n,y)r(y)$.
Let $N:=\bigcup_{n\ge1}N_n$, so $\mu_1(N)=0$. For $y\in M\setminus N$, the above identity holds for every $x_n$, and since $x\mapsto K_i(x,y)$ is continuous, it follows by density that
\[
K_1(x,y)=K_2(x,y)r(y)\qquad(\forall x\in M,\ y\in M\setminus N).
\]
Setting $x=y$ and using $k(0)>0$ gives $r(y)=1$ outside $N$, hence $K_1(x,y)=K_2(x,y)$ for every $x$ and for $\mu_1$-almost every $y$.
Fixing $x$ and using continuity in $y$ together with the full support of $\mu_1$, we conclude that $K_1(x,y)=K_2(x,y)$ for all $x,y\in M$. Strict decrease of $k$ gives $d_{g_1}=d_{g_2}$ on each connected component, and Lemma~\ref{lem:metric_determined_by_distance} gives $g_1=g_2$. Thus $\mu_1=\mu_2=: \mu_g$. Returning to the identity above gives $K_1(x,y)p_1(y)=K_1(x,y)p_2(y)$ for $\mu_g$-almost every $y$.
Choosing one $x$ in each of the finitely many connected components, for which $K_1(x,y)>0$ on that component, gives $p_1=p_2$ $\mu_g$-almost everywhere on $M$.
\end{proof}

\subsection{Extrinsic/ambient continuous graph Laplacians}
Let $\iota:M\to\mathbb{R}^{D}$ be a $C^2$ embedding. Fix $t>0$. For a continuous Riemannian metric $g$ on $M$ and a measurable density $p$ with $p>0$ $\mu_g$-almost everywhere, recall the notation
\[
K_{\iota}(x,y):=k\!\left(\frac{\|\iota(x)-\iota(y)\|^2}{t}\right)
\qquad\text{and}\qquad
m:=p\,\mu_g.
\]
We do not assume at first that the Riemannian metric $g$ is induced by $\iota$.

\begin{proof}[Proof of Theorem~\ref{thm:extrinsic_identifies_measure}]
Define the integral operators
\[
(T_if)(x):=\int_M K_{\iota}(x,y)f(y)\,dm_i(y),\qquad f\in C(M),\quad m_i:=p_i\mu_{g_i},
\]
so that expanding \eqref{eq:extrinsic_L} yields
\[
(L^{\mathrm{ext}}_{t,p_i}(g_i;\iota)f)(x)=\frac{1}{t^{d/2+1}}\Big((T_i1)(x)f(x)-(T_if)(x)\Big).
\]
From operator equality we get, for all $f\in C(M)$,
\[
(T_1f)(x)-(T_2f)(x)=\big((T_11)(x)-(T_21)(x)\big)f(x).
\]
Now following Step~1 of the proof of Theorem~\ref{thm:intrinsic_metric_identifiable}, we conclude $T_1=T_2$, which implies by the Riesz representation theorem that $K_{\iota}(x,\cdot)m_1=K_{\iota}(x,\cdot)m_2$ as measures. Since $y\mapsto K_{\iota}(x,y)$ is continuous and strictly positive on $M$, it follows immediately that $m_1=m_2$.
\end{proof}

\begin{corollary}[\textbf{Fixed embedding, fixed density $p$: extrinsic operator identifies the volume form $\mu_g$}]
\label{cor:extrinsic_identifies_volume_form}
Fix $t>0$, a smooth embedding $\iota$, and $C^2$ metrics $g_1,g_2$. Let $p$ be a common measurable function with $p>0$ $\mu_{g_i}$-almost everywhere and $\int_M p\,d\mu_{g_i}=1$ for $i=1,2$. If
\[
L^{\mathrm{ext}}_{t,p}(g_1;\iota)=L^{\mathrm{ext}}_{t,p}(g_2;\iota)\quad\text{on }C(M),
\]
then $\mu_{g_1}=\mu_{g_2}$.
\end{corollary}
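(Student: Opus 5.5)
The plan is to deduce this directly from Theorem~\ref{thm:extrinsic_identifies_measure}. We apply that theorem with the fixed embedding $\iota$ and the pairs $(g_1,p)$ and $(g_2,p)$. A smooth embedding is in particular $C^2$. Each $g_i$ is $C^2$, and the common function $p$ is a probability density with respect to $\mu_{g_i}$ that is positive $\mu_{g_i}$-almost everywhere. So the hypotheses hold, and equality of the extrinsic operators gives $m_1=m_2$, that is,
\[
p\,\mu_{g_1}=p\,\mu_{g_2}\quad\text{as Borel measures on }M.
\]

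It remains to cancel $p$. As in Step~2 of the proof of Theorem~\ref{thm:intrinsic_metric_identifiable}, the volume measures $\mu_{g_1}$ and $\mu_{g_2}$ are mutually absolutely continuous. We write $d\mu_{g_2}=w\,d\mu_{g_1}$ with $w$ continuous and strictly positive; in local charts $w=\sqrt{\det g_2}/\sqrt{\det g_1}$. The identity of measures then says $\int_B p\,d\mu_{g_1}=\int_B p\,w\,d\mu_{g_1}$ for every Borel set $B$. Hence
\[
p(1-w)=0\quad\mu_{g_1}\text{-almost everywhere}.
\]
Since $p>0$ $\mu_{g_1}$-almost everywhere, this gives $w=1$ $\mu_{g_1}$-almost everywhere. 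Because $w$ is continuous and $\mu_{g_1}$ has full support, we get $w\equiv1$, and therefore $\mu_{g_1}=\mu_{g_2}$.

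One technical point needs care. The Radon--Nikodym argument above must not assume that $p$ is integrable in any sense beyond what is given, namely $\int_M p\,d\mu_{g_i}=1$. This is harmless here, because both $p\,\mu_{g_1}$ and $p\,\mu_{g_2}$ are finite measures and we only compare them on Borel sets.

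There is no serious obstacle; the substance is already contained in Theorem~\ref{thm:extrinsic_identifies_measure}. The only step needing care is the full-support/continuity argument that upgrades $w=1$ almost everywhere to $w\equiv1$. It uses that $p>0$ almost everywhere, so that dividing by $p$ is legitimate off a null set.
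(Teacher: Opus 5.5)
Your proposal is correct and matches the paper's proof: apply Theorem~\ref{thm:extrinsic_identifies_measure} to get $p\,\mu_{g_1}=p\,\mu_{g_2}$, then cancel $p$ using the mutual absolute continuity of the two volume measures and $p>0$ almost everywhere. Your continuity and full-support step upgrading $w=1$ $\mu_{g_1}$-a.e.\ to $w\equiv1$ is harmless but not needed, since $w=1$ $\mu_{g_1}$-a.e.\ together with $d\mu_{g_2}=w\,d\mu_{g_1}$ already gives $\mu_{g_1}=\mu_{g_2}$.
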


\begin{proof}
By Theorem~\ref{thm:extrinsic_identifies_measure}, we have $p\,\mu_{g_1}=p\,\mu_{g_2}$. The two volume measures are equivalent and $p>0$ almost everywhere for each, so division by $p$ gives $\mu_{g_1}=\mu_{g_2}$.
\end{proof}

\begin{corollary}[\textbf{Fixed embedding, fixed metric: extrinsic operator identifies density up to almost everywhere equality}]
\label{cor:extrinsic_identifies_density}
Fix $t>0$, a smooth embedding $\iota$, and a $C^2$ metric $g$. If $p_1,p_2$ are measurable densities satisfying $p_1>0$ and $p_2>0$ $\mu_g$-almost everywhere, and
\[
L^{\mathrm{ext}}_{t,p_1}(g;\iota)=L^{\mathrm{ext}}_{t,p_2}(g;\iota)\quad\text{on }C(M),
\]
then $p_1=p_2$ $\mu_g$-almost everywhere.
\end{corollary}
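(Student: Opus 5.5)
This corollary is a direct specialization of Theorem~\ref{thm:extrinsic_identifies_measure}: take $g_1=g_2=g$ and keep the same embedding $\iota$. The hypotheses of that theorem are met. The metric $g$ is $C^2$, each $p_j$ is measurable and positive $\mu_g$-almost everywhere, and the operator equality is exactly the assumed one. Here I read ``densities'' as probability densities with respect to $\mu_g$, as in the theorem; a smooth embedding is in particular $C^2$. The theorem then gives equality of the sampling measures,
\[
p_1\,\mu_g=p_2\,\mu_g\qquad\text{as Borel measures on }M.
\]

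The remaining step is to pass from equality of measures to almost-everywhere equality of densities. For every Borel set $B$ we have $\int_B (p_1-p_2)\,d\mu_g=0$. Both $p_j$ are $\mu_g$-integrable, since their integrals are $1$. Taking $B=\{p_1>p_2\}$ and then $B=\{p_1<p_2\}$ shows that each of these sets is $\mu_g$-null. Hence $p_1=p_2$ $\mu_g$-almost everywhere, which is the Radon--Nikodym uniqueness statement.

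If one prefers not to assume normalization, the same conclusion comes from redoing the argument directly. Step~1 of the proof of Theorem~\ref{thm:intrinsic_metric_identifiable} only uses that $\mu_g$ has no atoms, so it still gives $T_1=T_2$. This yields $K_\iota(x,\cdot)p_1\,\mu_g=K_\iota(x,\cdot)p_2\,\mu_g$ for each fixed $x$. For this to make sense each $p_j$ must be locally integrable, which is implicit in the operators being defined on $C(M)$. Since $K_\iota(x,\cdot)$ is continuous and strictly positive on $M$ (the extrinsic distance is finite everywhere), we can divide by it and obtain $p_1=p_2$ $\mu_g$-almost everywhere. There is no genuine obstacle here; the only point needing care is this integrability check when normalization is dropped.
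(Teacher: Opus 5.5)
Your proposal is correct and follows the paper's own proof: specialize Theorem~\ref{thm:extrinsic_identifies_measure} to $g_1=g_2=g$ to get $p_1\mu_g=p_2\mu_g$, then conclude $p_1=p_2$ $\mu_g$-almost everywhere. You add the Radon--Nikodym uniqueness step and the unnormalized variant, which the paper leaves implicit.
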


\begin{proof}
By Theorem~\ref{thm:extrinsic_identifies_measure}, $p_1\mu_g=p_2\mu_g$. Hence $p_1=p_2$ $\mu_g$-almost everywhere.
\end{proof}

\begin{remark}[\textbf{Knowing volume form $\mu_g$ does not determine metric (fixed embedding)}]
Corollary~\ref{cor:extrinsic_identifies_volume_form} shows that for fixed $(t,\iota,p)$ the operator $L^{\mathrm{ext}}_{t,p}(g;\iota)$ determines $\mu_g$. In general, $\mu_g$ does not determine $g$, so the metric is not identifiable from $L^{\mathrm{ext}}_{t,p}(g;\iota)$.
\end{remark}

\begin{example}[\textbf{Explicit counterexample on the torus $\mathbb{T}^2$}]
Let $M=\mathbb{T}^2=\mathbb{R}^2/(2\pi\mathbb{Z})^2$ with coordinates $(x,y)$. Fix $\iota:\mathbb{T}^2\to\mathbb{R}^4$ by
\[
\iota(x,y)=(\cos x,\sin x,\cos y,\sin y).
\]
Fix any $t>0$ and any strictly positive probability density $p$ with respect to the (common) flat volume measure. Let
\[
g_1:=dx^2+dy^2,\qquad g_2:=a^2dx^2+a^{-2}dy^2\quad\text{for some constant }a>0,\ a\neq1.
\]
Then $\det(g_1)=1$ and $\det(g_2)=a^2\cdot a^{-2}=1$, hence $\mu_{g_1}=\mu_{g_2}$ and therefore $L^{\mathrm{ext}}_{t,p}(g_1;\iota)=L^{\mathrm{ext}}_{t,p}(g_2;\iota)$, while $g_1\neq g_2$.
\end{example}

\begin{proof}[Proof of Theorem~\ref{thm:extrinsic_induced_metric_identifiable}]
For $i=1,2$ define the integral operators
\[
(T_if)(x):=\int_M K_i(x,y)f(y)p(y)\,d\mu_i(y),\qquad f\in C(M),
\]
where
\[
\mu_i:=\mu_{g_i},\qquad K_i(x,y):=k\!\left(\frac{\|\iota_i(x)-\iota_i(y)\|^2}{t}\right),\qquad i=1,2.
\]
Then
\[
(L^{\mathrm{ext}}_{t,p}(g_i;\iota_i)f)(x)=\frac{1}{t^{d/2+1}}\Big((T_i1)(x)f(x)-(T_if)(x)\Big).
\]
Hence the assumption $L^{\mathrm{ext}}_{t,p}(g_1;\iota_1)=L^{\mathrm{ext}}_{t,p}(g_2;\iota_2)$ implies that for all $f\in C(M)$ and all $x\in M$,
\begin{equation}\label{eq:T_identity_extrinsic_metric}
(T_1f)(x)-(T_2f)(x)=\big((T_11)(x)-(T_21)(x)\big)f(x).
\end{equation}
\paragraph{Step 1: reduction to equality of the integral operators.}
The argument showing from \eqref{eq:T_identity_extrinsic_metric} that $T_11=T_21$ and hence $T_1=T_2$ is identical to Step~1 in the proof of Theorem~\ref{thm:intrinsic_metric_identifiable} and is therefore omitted.

\paragraph{Step 2: identification of the distance in the ambient space.}
Fix $x\in M$. Since $T_1=T_2$, for all $f\in C(M)$,
\[
\int_M K_1(x,y)f(y)p(y)\,d\mu_1(y)=\int_M K_2(x,y)f(y)p(y)\,d\mu_2(y).
\]
Because the metrics induced by the $C^2$ embeddings are $C^1$ on compact $M$, their volume measures are mutually absolutely continuous; write $d\mu_2=w\,d\mu_1$ with a continuous function $w>0$. Then
\[
\int_M f(y)\big(K_1(x,y)-K_2(x,y)w(y)\big)p(y)\,d\mu_1(y)=0\qquad(\forall f\in C(M)).
\]
Since $p>0$ and $\mu_1$ has full support, this implies, using continuity of $K_i$ and $w$,
\[
K_1(x,y)=K_2(x,y)w(y)\qquad\text{for all }y\in M.
\]
Setting $y=x$ yields $k(0)=k(0)w(x)$, hence $w(x)=1$ for all $x$ and thus $w\equiv1$. Consequently $K_1\equiv K_2$ on $M\times M$, and strict decrease of $k$ gives
\[
\|\iota_1(x)-\iota_1(y)\|^2=\|\iota_2(x)-\iota_2(y)\|^2\qquad\text{for all }x,y\in M.
\]

\paragraph{Step 3: identification of the induced metric.}
Fix $x\in M$ and $v\in T_xM$. Let $\gamma:(-\varepsilon,\varepsilon)\to M$ be a $C^2$ curve with $\gamma(0)=x$ and $\dot\gamma(0)=v$. For $i=1,2$, Taylor expansion yields
\[
\iota_i(\gamma(s))=\iota_i(x)+s\,d\iota_i(x)[v]+o(s)\qquad(s\to0),
\]
and therefore
\[
\|\iota_i(\gamma(s))-\iota_i(x)\|^2=s^2\|d\iota_i(x)[v]\|^2+o(s^2)\qquad(s\to0).
\]
By Step~2 the left-hand sides coincide for $i=1,2$ for all sufficiently small $s$, hence
\[
\|d\iota_1(x)[v]\|^2=\|d\iota_2(x)[v]\|^2.
\]
Since $g_i(x)(v,v)=\|d\iota_i(x)[v]\|^2$, this holds for all $x$ and $v$, and we conclude that $g_1=g_2$.
\end{proof}

\begin{proof}[Proof of Theorem~\ref{thm:extrinsic_joint_identifies_metric_and_density}]
Write $\mu_i:=\mu_{g_i}$, $K_i(x,y):=k(\|\iota_i(x)-\iota_i(y)\|^2/t)$, and $d\mu_2=v\,d\mu_1$ with $v\in C(M)$ strictly positive. The measures $p_i\mu_i$ have no atoms. By the reduction in Step~1 of the proof of Theorem~\ref{thm:intrinsic_metric_identifiable}, equality of the graph Laplace operators implies equality of their integral operators. Consequently, for each fixed $x$, $K_1(x,y)p_1(y)=K_2(x,y)p_2(y)v(y)$ for $\mu_1$-almost every $y$.

Set $r:=p_2v/p_1$ almost everywhere. Choose a countable dense set $\{x_n\}\subset M$ and discard the union of the corresponding null sets in $y$. Continuity in $x$ then gives $K_1(x,y)=K_2(x,y)r(y)$ for every $x$ and for $\mu_1$-almost every $y$. Setting $x=y$ and using $k(0)>0$ shows $r(y)=1$ on this full-measure set. Continuity in $y$ and full support of $\mu_1$ imply $K_1=K_2$ everywhere on $M\times M$. Strict decrease of $k$ gives equality of all corresponding ambient pairwise distances; Step~3 of the proof of Theorem~\ref{thm:extrinsic_induced_metric_identifiable} yields $g_1=g_2$ and hence $\mu_1=\mu_2=: \mu_g$. The original measure identity now gives $K_1(x,y)p_1(y)=K_1(x,y)p_2(y)$ for almost every $y$ at any fixed $x$. Since the extrinsic kernel is strictly positive on $M\times M$, $p_1=p_2$ $\mu_g$-almost everywhere.
\end{proof}

\section{Proofs of fixed-bandwidth forward-map embeddings}\label{app:forwardmaps}

Let $M$ be a compact smooth manifold without boundary, and let $C(M)$ be endowed with the supremum norm $\norm{\cdot}_{\infty}$. We write $\cL(C(M),C(M))$ for the Banach space of bounded linear operators $C(M)\to C(M)$, endowed with the operator norm.

\begin{proof}[Proof of Theorem~\ref{thm:top_embedding_intrinsic_intro}]
We divide the proof into three steps.

\paragraph{Step 1: continuity of $\Phi_t^{\mathrm{int}}$.}
Let $(g_\ell,p_\ell)\in \Theta_{\mathrm{int}}$ be such that
\[
g_\ell\to g \quad\text{in } C^2,
\qquad
p_\ell\to p \quad\text{uniformly on }M.
\]
We first prove operator-norm convergence of the associated graph Laplace operators.

Fix a smooth reference Riemannian metric $h$ on $M$, and write
\[
d\mu_{g_\ell}=\rho_\ell\, d\mu_h,
\qquad
d\mu_g=\rho\, d\mu_h,
\]
where $\rho_\ell,\rho\in C(M)$ are positive. Since $g_\ell\to g$ in $C^2$, in particular $g_\ell\to g$ uniformly. It follows from the local coordinate expression of the Riemannian volume density that $\rho_\ell\to \rho$ uniformly on $M$.

Now define
\[
K_\ell(x,y):=k\!\left(\frac{d_{g_\ell}(x,y)^2}{t}\right),
\qquad
K(x,y):=k\!\left(\frac{d_g(x,y)^2}{t}\right).
\]
We claim that $K_\ell\to K$ uniformly on $M\times M$. Indeed, since $M$ is compact and $g_\ell\to g$ uniformly, for every $\varepsilon>0$ and all sufficiently large $\ell$,
\[
(1-\varepsilon)g \le g_\ell \le (1+\varepsilon)g
\]
as quadratic forms on $TM$. Comparing lengths of $C^1$ curves and taking infima yields
\[
\sqrt{1-\varepsilon}\, d_g(x,y)\le d_{g_\ell}(x,y)\le \sqrt{1+\varepsilon}\, d_g(x,y)
\qquad (x,y\text{ in the same connected component}).
\]
Each of the finitely many connected components of compact $M$ has finite $g$-diameter, so the distances converge uniformly on the union of the same-component products. Uniform continuity of $k$ on the resulting bounded range gives uniform convergence of the kernels there. On pairs in different components both kernels vanish by our convention. Therefore $K_\ell\to K$ uniformly on $M\times M$.

Since $p_\ell\rho_\ell\to p\rho$ uniformly, for every $\|f\|_\infty\le1$ and $x\in M$ the defining integral gives
\[
\bigl|((L^{(g_\ell)}_{t,p_\ell}-L^{(g)}_{t,p})f)(x)\bigr|
\le 2t^{-d/2-1}\mu_h(M)\,\|K_\ell p_\ell\rho_\ell-Kp\rho\|_{L^\infty(M\times M)}\to0.
\]
Thus $\Phi_t^{\mathrm{int}}$ is continuous.

\paragraph{Step 2: injectivity of $\Phi_t^{\mathrm{int}}$.}
Assume
\[
L^{(g_1)}_{t,p_1}=L^{(g_2)}_{t,p_2}.
\]
By Theorem~\ref{thm:intrinsic_joint_identifiable}, it follows that
\[
g_1=g_2\qquad\text{and}\qquad p_1=p_2 \quad \mu_g\text{-almost everywhere},
\]
with respect to the common volume measure $\mu_g$. Since $p_1,p_2\in C(M)$ and $\mu_g$ has full support on $M$, equality almost everywhere implies equality everywhere. Therefore $p_1=p_2$ on $M$. So $\Phi_t^{\mathrm{int}}$ is injective.

\paragraph{Step 3: conclusion.}
The space $\cL(C(M),C(M))$, endowed with the operator norm, is Hausdorff. By assumption, $\Theta_{\mathrm{int}}$ is compact. Since $\Phi_t^{\mathrm{int}}$ is continuous and injective, the standard compact-to-Hausdorff criterion implies that $\Phi_t^{\mathrm{int}}$ is a topological embedding for the operator norm. Proposition~\ref{prop:embedding_for_dstar} gives the same conclusion for $d_*$.
\end{proof}

\begin{proof}[Proof of Theorem~\ref{thm:top_embedding_extrinsic_intro}]
Again we argue in three steps.

\paragraph{Step 1: continuity of $\Phi_t^{\mathrm{ext}}$.}
Let $q:\operatorname{Emb}^{2}(M,\RR^{D})\to \cE^{2}(M,\RR^{D})$ be the quotient map. Consider the lifted map
\[
\widetilde{\Phi}_t^{\mathrm{ext}}:\operatorname{Emb}^{2}(M,\RR^{D})\times C(M)\to \cL(C(M),C(M)),
\]
defined by
\[
\widetilde{\Phi}_t^{\mathrm{ext}}(\iota,p):=L_{t,p}^{\mathrm{ext}}(\iota^{*}\la\cdot,\cdot\ra_{\RR^{D}};\iota).
\]
Rigid motions preserve both the kernel and the induced metric, so $\widetilde{\Phi}_t^{\mathrm{ext}}$ is invariant in its first argument. The map $q$ is open: for open $U\subset\operatorname{Emb}^{2}(M,\RR^{D})$, its saturation $q^{-1}(q(U))$ is the union of the rigid-motion translates of $U$, each open. Therefore $q\times\mathrm{id}_{C(M)}$ is an open quotient map. Once continuity of the lifted map is shown, the quotient universal property gives continuity of the descended map on $\cE^2(M,\RR^D)\times C(M)$, and hence of its restriction $\Phi_t^{\mathrm{ext}}$ to $\Theta_{\mathrm{ext}}$.

Let $(\iota_\ell,p_\ell)\to (\iota,p)$ in $\operatorname{Emb}^{2}(M,\RR^{D})\times C(M)$, that is, $\iota_\ell\to \iota$ in $C^{2}$ and $p_\ell\to p$ uniformly on $M$. Set
\[
g_\ell:=\iota_\ell^{*}\la\cdot,\cdot\ra_{\RR^{D}},\qquad g:=\iota^{*}\la\cdot,\cdot\ra_{\RR^{D}}.
\]
Then $g_\ell\to g$ in $C^{1}$, hence uniformly on $M$. Fix a smooth reference metric $h$ on $M$, and write
\[
d\mu_{g_\ell}=\rho_\ell\,d\mu_h,\qquad d\mu_{g}=\rho\,d\mu_h.
\]
Since $g_\ell\to g$ uniformly, we have $\rho_\ell\to \rho$ uniformly on $M$.

Define
\[
K_\ell(x,y):=k\!\left(\frac{\norm{\iota_\ell(x)-\iota_\ell(y)}^{2}}{t}\right),\qquad K(x,y):=k\!\left(\frac{\norm{\iota(x)-\iota(y)}^{2}}{t}\right).
\]
Since $\iota_\ell\to \iota$ uniformly, the squared chord distances converge uniformly and stay in a bounded interval; uniform continuity of $k$ there gives $K_\ell\to K$ uniformly on $M\times M$.

Since $p_\ell\rho_\ell\to p\rho$ uniformly, the same direct estimate as in the intrinsic case gives
\[
\|L_{t,p_\ell}^{\mathrm{ext}}(g_\ell;\iota_\ell)-L_{t,p}^{\mathrm{ext}}(g;\iota)\|_{C(M)\to C(M)}
\le 2t^{-d/2-1}\mu_h(M)\,\|K_\ell p_\ell\rho_\ell-Kp\rho\|_{L^\infty(M\times M)}\to0.
\]
Thus $\widetilde{\Phi}_t^{\mathrm{ext}}$ is continuous, and the quotient argument above gives continuity of $\Phi_t^{\mathrm{ext}}$.

\paragraph{Step 2: injectivity of $\Phi_t^{\mathrm{ext}}$.}
Assume
\[
L_{t,p_1}^{\mathrm{ext}}(g_{[\iota_1]};[\iota_1])=L_{t,p_2}^{\mathrm{ext}}(g_{[\iota_2]};[\iota_2])\qquad\text{as operators on }C(M).
\]
Choose representatives $\iota_1,\iota_2$ of the two classes. The proof of Theorem~\ref{thm:extrinsic_joint_identifies_metric_and_density} gives
\[
\norm{\iota_1(x)-\iota_1(y)}=\norm{\iota_2(x)-\iota_2(y)}\qquad\text{for all }x,y\in M,
\]
and
\[
p_1=p_2 \quad \mu_g\text{-almost everywhere},
\]
with respect to the common induced volume measure $\mu_g$.

Now the map
\[
F:\iota_1(M)\to \iota_2(M),\qquad F(\iota_1(x)):=\iota_2(x),
\]
is a distance-preserving bijection between subsets of $\RR^{D}$. To see that it extends to a rigid motion, choose $u_0\in\iota_1(M)$ and put $V=\operatorname{span}\{u-u_0:u\in\iota_1(M)\}$. The polarization identity for squared Euclidean distances shows that $u-u_0\mapsto F(u)-F(u_0)$ preserves all pairwise inner products. It therefore extends linearly to an isometry from $V$ onto $\operatorname{span}\{F(u)-F(u_0):u\in\iota_1(M)\}$. Extend this isometry by orthonormal bases to $A\in O(D)$, and set $b=F(u_0)-Au_0$. Then
\[
F(u)=Au+b\qquad\text{for all }u\in \iota_1(M).
\]
Equivalently, $\iota_2 = A\circ \iota_1 + b$, so that $[\iota_1]=[\iota_2]$. Since $p_1,p_2\in C(M)$ and $\mu_g$ has full support, almost everywhere equality implies pointwise equality. Hence $p_1=p_2$ on $M$. Therefore $\Phi_t^{\mathrm{ext}}$ is injective.

\paragraph{Step 3: conclusion.}
The space $\cL(C(M),C(M))$, endowed with the operator norm, is Hausdorff. By assumption, $\Theta_{\mathrm{ext}}$ is compact. Since $\Phi_t^{\mathrm{ext}}$ is continuous and injective, the standard compact-to-Hausdorff criterion implies that $\Phi_t^{\mathrm{ext}}$ is a topological embedding for the operator norm. Proposition~\ref{prop:embedding_for_dstar} gives the same conclusion for $d_*$.
\end{proof}

\section{Proofs of empirical convergence and recovery}\label{app:empirical}
\begin{proof}[Proof of Lemma~\ref{lem:empirical_dstar_convergence}]
For every fixed pair $(m,q)$ from the countable families defining $d_*$,
\[
(L_{n,t}^{\ell}f_m)(z_q)=\frac1n\sum_{j=1}^n t^{-d/2-1}k\!\left(\frac{\ell(z_q,X_j)^2}{t}\right)\bigl(f_m(z_q)-f_m(X_j)\bigr)
\xrightarrow{\mathrm{a.s.}}(L_t^{\ell}f_m)(z_q).
\]
The summands are i.i.d.\ and bounded in absolute value by $2k(0)t^{-d/2-1}$, so the strong law applies. Intersecting these countably many probability-one events gives simultaneous convergence. The terms in the series for $d_*$ are bounded by the summable weights $2^{-(m+q)}$; dominated convergence for series proves the assertion. Each coordinate $(L_{n,t}^{\ell}f_m)(z_q)$ is sample-measurable, and these coordinates generate the $d_*$ topology on the operator space, giving the stated measurability.
\end{proof}

\begin{proof}[Proof of Proposition~\ref{prop:consistency_MDE}]
For the probability claim, put $M_n(\theta):=-d_*(\widehat T_n,\Phi(\theta))$ and define $M$ by replacing $\widehat T_n$ with $T_0$. The reverse triangle inequality and compactness give the two conditions of Theorem~5.7 of \citet{vanDerVaart1998}:
\[
\sup_{\theta\in\Theta}|M_n(\theta)-M(\theta)|\le d_*(\widehat T_n,T_0)\xrightarrow{\mathbb P}0,
\qquad
\inf_{d_\Theta(\theta,\theta_0)\ge\varepsilon}d_*(\Phi(\theta),\Phi(\theta_0))>0
\]
for every $\varepsilon>0$ for which the set in the infimum is nonempty. Indeed, this set is compact and the displayed distance is continuous and strictly positive on it by injectivity. Approximate minimization implies $M_n(\widehat\theta_n)\ge M_n(\theta_0)-r_n$, the near-maximizer condition of that theorem. Thus $\widehat\theta_n\to\theta_0$ in probability.

For the almost-sure claim, directly from approximate minimization and the triangle inequality,
\[
d_*(\Phi(\widehat\theta_n),T_0)\le 2d_*(\widehat T_n,T_0)+r_n\longrightarrow0
\qquad\text{almost surely}.
\]
Continuity of $\Phi^{-1}$ on $\Phi(\Theta)$ yields $\widehat\theta_n\to\theta_0$ on the same probability-one event.
\end{proof}

\begin{proof}[Proof of Theorem~\ref{thm:intrinsic_recovery_intro}]
For an exact estimator take $r_n=0$; for an asymptotic estimator $r_n\to0$ in probability by Definition~\ref{def:minimum_distance_estimator}. Lemma~\ref{lem:empirical_dstar_convergence} gives $d_*(L_{n,t}^{d_{g_0}},\Phi_t^{\mathrm{int}}(g_0,p_0))\to0$ almost surely. Theorem~\ref{thm:top_embedding_intrinsic_intro} gives a topological embedding for $d_*$. Apply Proposition~\ref{prop:consistency_MDE}. Its conclusion is convergence in the product $C^2\times C(M)$ topology, which is precisely the two asserted modes of parameter convergence. The almost-sure conclusion follows from the almost-sure part of the same proposition.
\end{proof}

\begin{proof}[Proof of Theorem~\ref{thm:extrinsic_recovery_intro}]
For an exact estimator take $r_n=0$; for an asymptotic estimator $r_n\to0$ in probability by Definition~\ref{def:minimum_distance_estimator}. Lemma~\ref{lem:empirical_dstar_convergence} gives $d_*(L_{n,t}^{\ell_0},\Phi_t^{\mathrm{ext}}([\iota_0],p_0))\to0$ almost surely. Theorem~\ref{thm:top_embedding_extrinsic_intro} and Proposition~\ref{prop:consistency_MDE} yield convergence in the product of the quotient $C^2$ and uniform topologies, hence the stated conclusions, including the almost-sure versions.
\end{proof}

\bibliographystyle{plainnat}
\bibliography{references}

\end{document}